\documentclass[12pt]{amsart}

\usepackage[T1]{fontenc}
\usepackage{lmodern}

\usepackage{mathtools,amsthm,amssymb}

\usepackage[sort&compress,numbers]{natbib}

\setlength{\textwidth}{16.0cm}
\setlength{\textheight}{24.0cm}
\setlength{\hoffset}{-2.0cm}
\setlength{\voffset}{-1.1cm}
\addtolength{\headheight}{3.5pt}

\newcommand{\N}{{\mathbb N}}
\newcommand{\Z}{{\mathbb Z}}
\newcommand{\R}{{\mathbb R}}

\newcommand{\eps}{\varepsilon}

\newcommand{\bX}{{\boldsymbol{X}}}
\newcommand{\bG}{{\boldsymbol{G}}}
\newcommand{\bU}{{\boldsymbol{U}}}
\newcommand{\tX}{{\widetilde{X}}}
\newcommand{\tV}{{\widetilde{V}}}
\newcommand{\fX}{{\mathfrak{X}}}

\newcommand{\cS}{\widehat{{\mathcal{S}}}}
\newcommand{\tS}{\widehat{S}}

\newcommand{\ee}{{\mathrm{e}}}
\newcommand{\cc}{{\mathfrak{c}}}

\newcommand{\Sd}{{\mathbb S}^{\mathrm{det}}}
\newcommand{\Sr}{{\mathbb S}^{\mathrm{ran}}}
\newcommand{\Sb}{{\mathbb S}^{\mathrm{bit}}}
 
\newcommand{\Lcon}{\gamma}
\newcommand{\E}{\operatorname{E}}
\newcommand{\Lip}{\operatorname{Lip}}
\newcommand{\cost}{\operatorname{cost}}
\newcommand{\cb}{\operatorname{comp}^{\mathrm{bit}}}
\newcommand{\cd}{\operatorname{comp}^{\mathrm{det}}}
\newcommand{\cra}{\operatorname{comp}^{\mathrm{ran}}}

\newcommand{\class}{\mathrm{c}}
\newcommand{\bit}{\mathrm{bit}}
\newcommand{\stwo}{\mathrm{Bbit}}

\newcommand{\Var}{\operatorname{Var}}
\newcommand{\rbit}{\operatorname{rbit}}

\theoremstyle{definition}
\newtheorem{defn}{Definition}
\theoremstyle{plain}
\newtheorem{thm}[defn]{Theorem}
\newtheorem{lem}[defn]{Lemma}
\newtheorem{cor}[defn]{Corollary}

\theoremstyle{remark}
\newtheorem{rem}[defn]{Remark}
\newtheorem{exmp}[defn]{Example}

\begin{document}

\title[Random Bit Multilevel for SDEs]
{Random Bit Multilevel Algorithms for Stochastic Differential Equations}

\author[Giles]
{Michael B.\ Giles}
\address{Mathematical Institute\\
University of Oxford\\
Oxford OX2 6GG\\
England} 
\email{mike.giles@maths.ox.ac.uk} 

\author[Hefter]
{Mario Hefter}
\address{Fachbereich Mathematik\\
Technische Universit\"at Kaisers\-lautern\\
Postfach 3049\\
67653 Kaiserslautern\\
Germany}
\email{\{hefter,lmayer,ritter\}@mathematik.uni-kl.de}

\author[Mayer]
{Lukas Mayer}

\author[Ritter]
{Klaus Ritter}

\begin{abstract}
We study the approximation of expectations $\E(f(X))$
for solutions $X$ of SDEs and functionals $f \colon C([0,1],\R^r) \to \R$
by means of restricted Monte Carlo algorithms that may only 
use random bits instead of random numbers.
We consider the worst case setting for functionals $f$ from the 
Lipschitz class w.r.t.\ the supremum norm.
We construct a random bit multilevel Euler algorithm and establish
upper bounds for its error and cost.
Furthermore, we derive matching lower bounds, up to
a logarithmic factor, that are valid for all
random bit
Monte Carlo algorithms, and we show that, for the given
quadrature problem,
random bit
Monte Carlo algorithms are at least almost as powerful
as general randomized algorithms.
\end{abstract}

\keywords{random bits,
multilevel Monte Carlo algorithms,
stochastic differential equations}

\subjclass[2010]{60H35, 60H10, 65D30, 65C05}

\date{January 16, 2019}

\maketitle

\section{Introduction}

We study the approximation of expectations $\E(f(X))$,
where $X=(X(t))_{t\in [0,1]}$ is 
the $r$-dimensional solution of an autonomous SDE 
with deterministic initial value and 
Lipschitz continuous drift and diffusion coefficients,
driven by a $d$-dimensional Brownian motion.
Moreover, $f \in \Lip_1$, i.e.,
\[
f \colon C([0,1],\R^r)  \to \R
\]
is Lipschitz continuous with
respect to the supremum norm with Lipschitz constant at most one.

We consider randomized (Monte Carlo) algorithms
that are only allowed to use random bits instead of 
random numbers. By assumption, all other operations 
(arithmetic operations, evaluations of elementary functions, and 
oracle calls to evaluate $f$ as well as the drift and diffusion
coefficients of the SDE)
are performed exactly.
Algorithms of this type are called
random bit
Monte Carlo algorithms, and the approximation of expectations
by algorithms of this type is called random bit quadrature.
Due to the path-dependence of $f$ the approximation of
$\E(f(X))$ is an infinite-dimensional
quadrature problem.

In a worst case setting algorithms $A$ are compared according to their 
worst case error $\ee(A,\Lip_1)$ and their worst case cost
$\cost(A,\Lip_1)$ on the class $\Lip_1$.
The cost takes into account, in particular, 
the number of random bits used 
and the information cost, i.e., the cost for the evaluations of
$f$. 
For the latter, we suppose that any $f \in \Lip_1$
can be evaluated at any piecewise linear function $x \in C([0,1],\R^r)$
with 
equidistant breakpoints $0,1/2^\ell, \dots, 1$ at cost $2^\ell+1$ for 
any $\ell \in \N_0$.
See Sections \ref{seccomp}, \ref{ls1}, and \ref{s53} for details.

The main contribution of this paper is the construction of a random 
bit multilevel Euler algorithm $A_\eps^\stwo$ that is almost optimal.
First of all, we have the following upper bound:
There exists a constant $c>0$ such that
\[
\ee\bigl(A_\eps^\stwo,\Lip_1\bigr)
\leq c \cdot \eps
\]
and
\begin{align}\label{costbound}
\cost\bigl(A_\eps^\stwo,\Lip_1\bigr)
\leq
c \cdot \eps^{-2} \cdot (\ln(\eps^{-1}))^3
\end{align}
for every $\eps \in {]0,1/2[}$, see Theorem~\ref{theo3}.
This result coincides with the best known upper bound for 
general randomized algorithms, which are achieved by the classical 
multilevel Euler algorithm.
See \citet{G15} for a survey on multilevel algorithms.

An important ingredient for the construction of $A_\eps^\stwo$
is Bakhvalov's trick: A small number of independent
random variables, each uniformly distributed on $\{1,\dots, 2^q\}$,
yields a much larger number of pairwise
independent random variables with the same
uniform distribution.
The number of random bits used by the 
algorithm $A_\eps^{\stwo}$ is of the order 
$\eps^{-2} \cdot (\ln(\eps^{-1}))^{5/2}$, see the proof of 
Theorem~\ref{theo3}, and it can be reduced further to 
$\eps^{-2} \cdot (\ln(\eps^{-1}))^{2} \cdot \ln(\ln(\eps^{-1}))$, 
as outlined in Remark~\ref{cr2}.
Since the upper bound in \eqref{costbound} is sharp,
the number of random bits is asymptotically negligible 
compared to the overall cost of $A_\eps^\stwo$. 

Secondly, the algorithm $A_\eps^\stwo$ is optimal, up to logarithmic 
factors. Under a slightly stronger smoothness assumption as well as a 
non-degeneracy assumption on the diffusion coefficient of the SDE,
which in particular exclude pathological cases yielding a deterministic 
solution $X$, the following holds true:
There exist constants $c,\eps_0>0$ such that
\[
\cost\bigl(A,\Lip_1\bigr) \geq c \cdot \eps^{-2}
\]
for every
random bit
Monte Carlo algorithm $A$ and
for every $\eps \in {]0,\eps_0]}$ such that
\[
\ee\bigl(A,\Lip_1\bigr) \leq \eps.
\]
Actually, there are two variants of this result, both of which hold
for a much broader class of algorithms.
In the first variant, the evaluation of $f$ is allowed at arbitrary 
points $x \in C([0,1],\R^r)$ at cost one, while the
number of random bits is taken into account as before, 
see Theorem~\ref{t23}. 
In the second variant, which is due to \citet[Thm.~11]{CDMGR09},
roughly speaking, any kind of randomness is allowed for free, 
but the cost model with evaluations of $f$ only at piecewise
linear functions is kept.
We do not know whether random bits are as powerful as random numbers
for the quadrature problem under investigation, 
but the upper and lower bounds imply that
random bits are at least almost as powerful as random numbers.

This work is a continuation of \citet{GHMR17}.
Two approximation problems involving random bits have been considered
there: Random bit quadrature, as in the present paper, and
random bit approximation of probability measures,
which is closely related to quantization.
Furthermore, two classes of processes have been considered there:
Gaussian processes and solution processes of SDEs.
For three of the possible four combinations results have
already been obtained in \citet{GHMR17}.
The combination of random bit quadrature for SDEs, which remained open,
is addressed in the present paper.

This work is partially motivated by
reconfigurable architectures like field programmable gate arrays (FPGAs).
These devices allow users to choose
the precision of each individual operation on a bit level and provide a 
generator for random bits.
In the setting and analysis of the present
paper we take into account the latter fact, while we 
ignore all finite precision issues for arithmetic operations.
We refer to
\citet{6924076,bookbits}
for the construction and 
for extensive tests of a
finite precision multilevel algorithm for FPGAs with applications
in computational finance. For an error analysis of the Euler scheme
for SDEs in a finite precision arithmetic we refer to \citet{O16}.
A complete analysis on a bit level for the arithmetic operations, 
the random number generator, and the oracle is not yet available.

In most of the papers on randomized algorithms
for continuous problems, uniformly distributed random numbers 
from $[0,1]$ are assumed to be available.
Random bit
Monte Carlo algorithms are studied 
for the classical, finite-dimensional quadrature problem
to approximate $\int_{[0,1]^d} f(x)\,\mathrm dx$ 
in, e.g., \citet{GYW06,MR2076605,N85,N88,N01,TW92,YH08}.
See \citet{NP04} for a related approach to integral equations. 

In \citet{MR2076605}, random bit quadrature with respect to the uniform 
distribution on $[0,1]^d$ and Sobolev and H\"older classes of 
functions $f \colon [0,1]^d \to \R$ are considered. It is shown that
random bit
Monte Carlo algorithms are as powerful as 
general randomized algorithms,
and a very small number of random bits suffice 
to achieve asymptotic optimality. The proofs of these results are based
on a reduction of the quadrature problem to a summation problem
and on Bakhvalov's trick. 

In contrast to, e.g., \citet{MR2076605}, we do not derive lower
bounds that separately take into account the number of random bits
and the
information cost.
Instead, we establish lower bounds in terms of the overall cost,
which includes, in particular, the sum of both of these quantities.
A general framework to study
Monte Carlo
algorithms that only have access to generators for an arbitrary, but fixed
set of probability distributions
has recently been introduced in \citet{H18}.
Within the latter framework the information cost and the cost
associated to the calls of the available random number generators
are naturally studied separately. 

This paper is organized as follows. In Section~\ref{seccomp} we formulate 
the computational problem. Section~\ref{Euler:schemes} is devoted to the 
definition and strong error analysis of a random bit Euler scheme.
In Section~\ref{sec5} we present the construction of the random bit 
multilevel Euler algorithm with corresponding error and cost bounds.
Lower bounds for
random bit
Monte Carlo methods are derived in 
Section~\ref{bounds}.
In Appendix~\ref{app} we present Bakhvalov's trick in a form that 
fits to our needs.

\section{The Computational Problem}\label{seccomp}

Let $r,d \in \N$. Henceforth we use $|\cdot|$ to denote the 
Euclidean norm, and we consider the corresponding
supremum norm $\|\cdot\|$ on $C([0,1],\R^r)$.

We consider an autonomous system 
\[
\phantom{\qquad\quad t \in [0,1]}
\mathrm{d}X(t) = a(X(t))\,\mathrm{d}t + b(X(t))\,\mathrm{d}W(t), 
\qquad\quad t \in [0,1],
\]
of SDEs with a deterministic initial value
\[
X(0) = x_0 \in \R^r
\]
and a $d$-dimensional Brownian motion $W$, and with
Lipschitz-continuous drift and diffusion coefficients
$a \colon \R^r \to \R^r$ and $b \colon \R^r \to \R^{r \times d}$,
respectively. We study the approximation of
\begin{equation}\label{g20}
S(f) = \E(f(X))
\end{equation}
for functionals $f \colon C([0,1],\R^r) \to \R$
that are Lipschitz continuous with Lipschitz constant at most one, i.e.,
\[
|f(x) - f(y)| \leq \|x - y\|
\]
for all $x,y \in C([0,1],\R^r)$.
The class of all such Lipschitz functionals $f$ is denoted by $\Lip_1$.

We employ the real-number model, i.e., we assume
that algorithms can perform comparisons, arithmetic operations
on real numbers, and the evaluation of elementary functions
at unit cost. Furthermore, we assume that the drift coefficient $a$ and
the diffusion coefficient $b$ can be evaluated at each
point $x \in \R^r$ at unit cost.
Moreover, we suppose that any functional $f \in \Lip_1$
can be evaluated at any piecewise linear function with 
breakpoints $k/2^\ell$ for $k=0,\dots,2^\ell$ at cost $2^\ell+1$ for 
any $\ell \in \N_0$.

Monte Carlo algorithms have access to a random number generator at cost 
one per call. Here, we distinguish two cases, namely algorithms with
access to random numbers from $[0,1]$, 
and algorithms with access to random bits only, 
which we refer to as
random bit
Monte Carlo algorithms.

The cost, $\cost(A,f)$, of applying the
Monte Carlo algorithm
$A$ to the functional $f$ is defined as the sum of the cost
associated to every instruction that is carried out.
Observe that under appropriate measurability assumptions 
$\cost(A,f)$ is a random quantity.
We define the worst case cost of $A$ on
the class $\Lip_1$ as
\[
\cost(A,\Lip_1)
=
\sup_{f \in \Lip_1} \E(\cost(A,f)).
\]

Since the output $A(f)$ of a
Monte Carlo
algorithm $A$ applied to an input 
functional $f$ from $\Lip_1$ is a random quantity, too,
its error is defined by
\[
\ee(A,f)
=
\bigl(\E|S(f) - A(f)|^2\bigr)^{1/2}
\]
under appropriate measurability assumptions.
Accordingly, the worst case error on the class $\Lip_1$ is defined by
\[
\ee(A,\Lip_1)
=
\sup_{f \in \Lip_1} \ee(A,f).
\]
See Section \ref{ls1} for a rigorous definition of a more general
cost model that only takes into account the information
cost concerning $f$ and the number of random bits used.

\section{Euler Schemes}\label{Euler:schemes}

The key ingredient for the construction and analysis of random bit 
multilevel algorithms, see Section~\ref{sec5},
is a random bit Euler scheme and strong error bounds thereof,
which will be presented in Sections~\ref{scheme1} 
and~\ref{scheme1:analysis}.

Let
\[
t_k = t_{k,m} = k/m,
\]
where $m \in \N$ and $k=0,\ldots,m$.
A corresponding Euler scheme is given by
\begin{align*}
X_m(t_{0,m}) &= x_0,\\
X_m(t_{k,m}) &= X_m(t_{k-1,m}) + m^{-1} \cdot a(X_m(t_{k-1,m})) + 
b(X_m(t_{k-1,m})) \cdot V_{k,m}
\end{align*}
with suitable random vectors $V_{k,m}$ to be defined below.
The multilevel approach relies on a coupling
of $X_m$ with an even number $m\in\N$ of steps to an Euler scheme 
\begin{align*}
\tX_{m/2}(t_{0,m/2})
&=
x_0,\\
\tX_{m/2}(t_{k,m/2})
&=
\tX_{m/2}(t_{k-1,m/2}) 
\begin{aligned}[t]
&+ 
(m/2)^{-1} \cdot a\bigl(\tX_{m/2}(t_{k-1,m/2})\bigr)\\
&+ b\bigl(\tX_{m/2}(t_{k-1,m/2})\bigr) 
\cdot \tV_{k,m/2}
\end{aligned}
\end{align*}
with $m/2$ steps.
Suitable random vectors $\tV_{k,m/2}$ will be defined below.

In order to approximate $X$ at any point 
$t \in [0,1]$ we extend $X_m(t_0),\ldots,X_m(t_m)$ and
$\tX_{m/2}(t_0),\ldots,\tX_{m/2}(t_{m/2})$ by linear 
interpolation onto the subintervals $\left]t_{k-1},t_k\right[$.

\subsection{The Classical Euler Scheme}\label{cleu}

In the vast majority of papers, $X_m$ is based
on the Brownian increments
\[
V_{k,m} = W(t_{k,m}) - W(t_{k-1,m}),
\]
which are naturally coupled via 
\begin{equation}\label{g10}
\tV_{k,m/2} = V_{2k,m} + V_{2k-1,m} =
W(t_{k,m/2}) - W(t_{k-1,m/2}).
\end{equation}
Obviously, $\tV_{k,m/2} = V_{k,m/2}$, which is 
convenient in the analysis of multilevel algorithms.
For the corresponding classical Euler scheme we use the notation 
$X_m^{\class}$ and $\tX_{m/2}^{\class}$, and likewise we use
$V_{k,m}^{\class}$ and $\tV_{k,m/2}^{\class}$ for the
corresponding increments.

\subsection{A Random Bit Euler Scheme}\label{scheme1}

In the present paper we study an Euler scheme that only uses
random bits instead of random numbers from $[0,1]$.
This excludes
the use of Brownian increments.
At first we discuss the approximation of standard
normal distributions based on random bits.

Let $\Phi$ denote the distribution function of $N(0,1)$ 
with inverse function $\Phi^{-1}$,
and let $Y \sim N(0,1)$.
We introduce the rounding function
\begin{align*}
T^{(q)} \colon {[0,1[} \to D^{(q)}, 
\quad x \mapsto \frac{\lfloor{2^q x}\rfloor}{2^q} + 2^{-(q+1)},
\end{align*}
where $q \in \N$ and
\[
D^{(q)} = \{\sum_{i=1}^q b_i \cdot 2^{-i} + 2^{-(q+1)} : 
b_i \in \{0,1\} \ \text{for} \ i=1,\ldots,q\}.
\]
Then
\[
Y^{(q)} = \Phi^{-1} \circ T^{(q)} \circ \Phi(Y)
\]
serves as a canonical approximation of $Y$.

Observe that $T^{(q)} \circ \Phi(Y)$ is uniformly distributed on $D^{(q)}$.
Consequently, $q$ random bits suffice to simulate the distribution of 
$Y^{(q)}$. 
Further properties of $Y^{(q)}$ have been established in
\citet{GHMR17}, see also Remark~\ref{rem:prop} below.
For a standard normally distributed random vector $Y$ an
approximation $Y^{(q)}$ is obtained
by applying $\Phi^{-1} \circ T^{(q)} \circ \Phi$ to each 
of the components of $Y$ separately.

We use this approximation 
in a straightforward way, i.e., we study
a random bit Euler scheme with
\begin{align}\label{eq10}
V_{k,m} = m^{-1/2} \cdot \bigl( m^{1/2} \cdot (W(t_{k,m}) -
W(t_{k-1,m}))\bigr)^{(q)}.
\end{align}
A suitable coupling is easily achieved by
\begin{align}\label{eq2}
\tV_{k,m/2} = V_{2k,m} + V_{2k-1,m},
\end{align}
cf.\ \eqref{g10}. 
To indicate the dependence of this coupled Euler scheme on the bit
number $q$ we use the notation $X_{m,q}^\bit$ and
$\tX_{m/2,q}^\bit$,
and likewise $V^\bit_{k,m,q}$
and $\tV^\bit_{k,m/2,q}$
for the approximations of the Brownian increments. Proper relations
between the 
number $q$ of bits and the number $m$ of Euler steps will 
be presented in Section~\ref{sec5}.

The simulation of the joint distribution of $X_{m,q}^\bit$ and
$\tX_{m/2,q}^\bit$ requires $d \cdot m \cdot q$ random bits.
We stress that
the distributions of $\tV^\bit_{k,m/2,q}$ and
$V^\bit_{k,m/2,q}$ do not coincide,
and therefore
\begin{align}\label{eqend100}
\E(f(X_{m/2}))
\neq
\E(f(\tX_{m/2}))
\end{align}
in general.
This introduces an additional
bias term in the multilevel analysis, cf.\
\citet[Thm.~6.1]{MSS15}.

\subsection{Strong Error Analysis}\label{scheme1:analysis}

It is well known that there exists a constant $c > 0$ such that 
\begin{equation}\label{g5}
\Bigl(\E \|X - X_m^\class\|^2\Bigr)^{1/2}
\leq
c \cdot m^{-1/2} \cdot (\ln(m+1))^{1/2}
\end{equation}
for every $m \in \N$. Thus we will provide an upper bound for the
difference between the random bit Euler scheme $X_{m,q}^\bit$
and the classical Euler scheme $X_m^\class$.
The difference between $\tX_{m/2,q}^\bit$ and
$\tX_{m/2}^\class$ may be treated in the same way.

\begin{rem}\label{rem:prop}
We gather some properties of the random vectors $V_{k,m}^\class$
and $V^\bit_{k,m}$ and of the scheme $X^\bit_{m,q}$.
\begin{itemize}
\item[(a)] 
We have independence of 
$(V^\bit_{k,m,q})_{k=1,\dots,m}$.
\item[(b)] 
There exists a constant $c > 0$ such that 
\begin{align*}
\Bigl(\E\bigl|V_{k,m}^\class-V^\bit_{k,m,q}\bigr|^2\Bigr)^{1/2}
\leq
c \cdot m^{-1/2} \cdot 2^{-q/2} \cdot q^{-1/2}
\end{align*}
and
\begin{align*}
\E\bigl(V^\bit_{k,m,q}\bigr) = 0
\end{align*}
for all $m \in \mathbb{N}$, $k=1,\ldots,m$, and $q \in \N$.
Furthermore, we have
\begin{align*}
\sup_{m \in \mathbb{N}} \sup_{k=1,\ldots,m} \sup_{q \in \N} 
\bigl(m^{1/2} \cdot 
\bigl(\E\bigl|V^\bit_{k,m,q}\bigr|^r\bigr)^{1/r}\bigr) < \infty
\end{align*}
for all $r \geq 1$. See \citet[Thm.~1]{GHMR17}. 
\item[(c)] 
We have $\E \|X^\bit_{m,q}\|^2 < \infty$
for every $m \in \mathbb{N}$ and $q \in \mathbb{N}$.
\end{itemize}
\end{rem}

In the sequel, we assume that the Lipschitz constants of the drift
coefficient $a$ and of the diffusion coefficient $b$ are bounded by 
$\Lcon$.

\begin{lem}\label{l:euler_vs_rbit-euler}
There exists a constant $c > 0$ such that for all $m,q \in \N$ 
we have
\begin{align*}
\max_{k=0,\ldots,m} 
\Bigl(\E \bigl|X_m^\class(t_k) - X^\bit_{m,q}(t_k)\bigr|^2\Bigr)^{1/2}
\leq
c \cdot 2^{-q/2} \cdot q^{-1/2}.
\end{align*}
\end{lem}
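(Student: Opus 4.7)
The plan is a standard discrete Gronwall argument for the $L^2$ difference $E_k := X_m^\class(t_k) - X^\bit_{m,q}(t_k)$, exploiting the item (b) bound on $V_{k,m}^\class - V^\bit_{k,m,q}$ as the sole source of error. Since $E_0 = 0$ and both schemes share the same coefficients, subtracting the two update equations yields
\[
E_k = \Delta_{k-1} + R_k,
\]
where $\Delta_{k-1} := E_{k-1} + m^{-1}\bigl(a(X_m^\class(t_{k-1})) - a(X^\bit_{m,q}(t_{k-1}))\bigr)$ and $R_k := b(X_m^\class(t_{k-1}))V_{k,m}^\class - b(X^\bit_{m,q}(t_{k-1})) V^\bit_{k,m,q}$. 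I would expand $R_k$ via the telescoping identity
\[
R_k = \bigl(b(X_m^\class(t_{k-1})) - b(X^\bit_{m,q}(t_{k-1}))\bigr) V_{k,m}^\class + b(X^\bit_{m,q}(t_{k-1}))\bigl(V_{k,m}^\class - V^\bit_{k,m,q}\bigr).
\]

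First I would take $|\cdot|^2$ and expectation of $E_k = \Delta_{k-1}+R_k$. The cross term $\E(\Delta_{k-1}^\top R_k)$ vanishes: both $V_{k,m}^\class$ and $V^\bit_{k,m,q}$ are measurable functions of the single Brownian increment $W(t_k)-W(t_{k-1})$, hence independent of $\mathcal{F}_{t_{k-1}}$, while both have mean zero by item~(b); conditioning on $\mathcal{F}_{t_{k-1}}$ kills each summand of the split above. For $\E|\Delta_{k-1}|^2$ the elementary inequality $(u+v)^2 \leq (1+1/m)u^2 + (1+m)v^2$ combined with the Lipschitz bound on $a$ gives $(1+C/m)\,\E|E_{k-1}|^2$.

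To bound $\E|R_k|^2$ I would apply $|R_k|^2 \leq 2|A_k|^2 + 2|B_k|^2$ with the two summands above. Independence of $V_{k,m}^\class$ from $\mathcal F_{t_{k-1}}$ together with the Lipschitz property of $b$ and $\E|V_{k,m}^\class|^2 = d/m$ yields $\E|A_k|^2 \leq (c/m)\,\E|E_{k-1}|^2$. For $B_k$, independence of $V_{k,m}^\class - V^\bit_{k,m,q}$ from $\mathcal F_{t_{k-1}}$, the item~(b) bound $\E|V_{k,m}^\class - V^\bit_{k,m,q}|^2 \leq c\cdot m^{-1}\cdot 2^{-q}\cdot q^{-1}$, and a uniform second-moment bound on $X^\bit_{m,q}(t_{k-1})$ together give $\E|B_k|^2 \leq c\cdot m^{-1}\cdot 2^{-q}\cdot q^{-1}$. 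The required uniform moment bound follows by a separate, short induction: expanding $|X^\bit_{m,q}(t_k)|^2$, using $\E(V^\bit_{k,m,q})=0$ and the uniform moment estimate from item~(b), one obtains $\E|X^\bit_{m,q}(t_k)|^2 \leq (1+C/m)\E|X^\bit_{m,q}(t_{k-1})|^2 + C/m$ and hence $\sup_{k,m,q}\E|X^\bit_{m,q}(t_k)|^2<\infty$.

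Assembling these estimates gives the recursion $\E|E_k|^2 \leq (1+C/m)\,\E|E_{k-1}|^2 + c\cdot m^{-1}\cdot 2^{-q}\cdot q^{-1}$, which unrolls (using $(1+C/m)^k \leq \ee^C$ and $k\leq m$) to $\max_k \E|E_k|^2 \leq c'\cdot 2^{-q}\cdot q^{-1}$; taking square roots yields the claim. The only genuinely delicate step I anticipate is verifying that both cross terms with $\Delta_{k-1}$ really vanish, since the bit-approximated increment $V^\bit_{k,m,q}$ is not literally a Brownian increment; the key is that it is nevertheless $\sigma(W(t_k)-W(t_{k-1}))$-measurable with mean zero, so it is independent of $\mathcal F_{t_{k-1}}$ and the conditional-expectation argument goes through unchanged.
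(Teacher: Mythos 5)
Your proposal is correct and follows essentially the same route as the paper: a discrete Gronwall recursion for $\E|X_m^\class(t_k)-X^\bit_{m,q}(t_k)|^2$, with the drift/diffusion cross term killed by the mean-zero increments' independence of the past, and the diffusion error split into a Lipschitz-difference part and an increment-difference part controlled by Remark~\ref{rem:prop}(b). The only (harmless) deviation is that your telescoping pairs the Lipschitz difference with $V^\class_{k,m}$ and the increment difference with $b(X^\bit_{m,q}(t_{k-1}))$ — so you need, and correctly supply by induction, a uniform second-moment bound for the random bit Euler scheme — whereas the paper uses the mirrored split and the standard moment bound for the classical Euler scheme.
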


\begin{proof}
This proof follows the standard analysis for the classical Euler 
scheme.

For $k=0,\ldots,m-1$ we have
\begin{align*}
X_m^\class(t_{k+1}) - X^\bit_{m,q}(t_{k+1})
=
\xi + \zeta,
\end{align*}
where
\begin{align*}
\xi
=
X_m^\class(t_k) - X^\bit_{m,q}(t_k) + m^{-1} \cdot 
\bigl(a(X_m^\class(t_k)) - a(X^\bit_{m,q}(t_k))\bigr)
\end{align*}
and
\begin{align*}
\zeta
= b(X_m^\class(t_k)) \cdot V_{k+1,m}^\class - 
b(X^\bit_{m,q}(t_k)) \cdot V^\bit_{k+1,m,q}.
\end{align*}
For any pair of components $\xi_i$ and $\zeta_i$ of $\xi$ and $\zeta$, 
respectively, we have
\begin{align*}
\E \bigl(\xi_i \cdot \zeta_i\bigr)
=
0,
\end{align*}
due to properties (a)--(c) from Remark~\ref{rem:prop}. It follows 
that
\begin{align*}
\E\bigl|X_m^\class(t_{k+1}) - X^\bit_{m,q}(t_{k+1})\bigr|^2
=
\E\bigl(|\xi|^2\bigr) + \E\bigl(|\zeta|^2\bigr).
\end{align*}

The Lipschitz continuity of $a$ yields
\begin{equation}\label{g1}
\bigl(\E|\xi|^2\bigr)^{1/2}
\leq
(1+\Lcon/m) \cdot 
\Bigl(\E\bigl|X_m^\class(t_k) - X^\bit_{m,q}(t_k)\bigr|^2\Bigr)^{1/2}.
\end{equation}
Moreover,
\begin{align*}
\E(|\zeta|^2)
&\leq
2 \cdot \Bigl(\E \bigl|b(X_m^\class(t_k)) \cdot 
\bigl(V_{k+1,m}^\class - V^\bit_{k+1,m,q}\bigr)\bigr|^2\\
&
\phantom{\leq} \mbox{}
+ \E\bigl|\bigl(b(X_m^\class(t_k)) - b\bigl(X^\bit_{m,q}(t_k)\bigr)\bigr) 
\cdot V^\bit_{k+1,m,q}\bigr|^2\Bigr).
\end{align*}
Due to the independence of
$(V^\class_{k,m})_{k=1,\dots,m}$
we have
\begin{align*}
\E\bigl|b(X_m^\class(t_k)) \cdot 
\bigl(V_{k+1,m}^\class - V^\bit_{k+1,m,q}\bigr)\bigr|^2
\leq
\E|b(X_m^\class(t_k))|^2 \cdot 
\E\bigl|V_{k+1,m}^\class - V^\bit_{k+1,m,q}\bigr|^2.
\end{align*}
Using
\begin{align*}
\E\bigl|b(X_m^\class(t_k))\bigr|^2
\leq
2 \cdot \Lcon^2 \cdot \E|X_m^\class(t_k)|^2 + 2 \cdot |b(0)|^2
\end{align*}
together with property (b) from Remark~\ref{rem:prop} we obtain a 
constant $c_1 > 0$ such that
\begin{align*}
\E\bigl|b(X_m^\class(t_k)) \cdot 
\bigl(V_{k+1,m}^\class - V^\bit_{k+1,m,q}\bigr)\bigr|^2
\leq c_1 \cdot m^{-1} \cdot 2^{-q} \cdot q^{-1}
\end{align*}
for all $m,q \in \N$ and $k=0,\ldots,m-1$.
Property (a) from Remark~\ref{rem:prop} also gives
\begin{align*}
\E\bigl|\bigl(b(X_m^\class(t_k)) - b\bigl(X^\bit_{m,q}(t_k)\bigr)\bigr) 
\cdot V^\bit_{k+1,m,q}\bigr|^2
\leq
\E\bigl|b(X_m^\class(t_k)) - b\bigl(X^\bit_{m,q}(t_k)\bigr)\bigr|^2 
\cdot \E\bigl|V^\bit_{k+1,m,q}\bigr|^2.
\end{align*}
Moreover,
\begin{align*}
\E\bigl|b(X_m^\class(t_k)) - b\bigl(X^\bit_{m,q}(t_k)\bigr)\bigr|^2
\leq
\Lcon^2 \cdot \E \bigl|X_m^\class(t_k) - X^\bit_{m,q}(t_k)\bigr|^2,
\end{align*}
and property (b) from Remark~\ref{rem:prop} yields the
existence of a constant $c_2 > 0$ such that
\begin{align*}
\E\bigl|\bigl(b(X_m^\class(t_k)) - b\bigl(X^\bit_{m,q}(t_k)\bigr)\bigr) 
\cdot V^\bit_{k+1,m,q}\bigr|^2
\leq
c_2 \cdot m^{-1} \cdot \E \bigl|X_m^\class(t_k) - X^\bit_{m,q}(t_k)\bigr|^2
\end{align*}
for all $m,q \in \N$ and $k=0,\ldots,m-1$.
It follows that
\begin{equation}\label{g2}
\E(|\zeta|^2)
\leq c_3/m \cdot \left( 2^{-q} \cdot q^{-1} + 
\E \bigl|X_m^\class(t_k) - X^\bit_{m,q}(t_k)\bigr|^2 \right)
\end{equation}
with $c_3 = 2 \cdot \max(c_1,c_2)$.
Combining \eqref{g1} and \eqref{g2} we
get the existence of a constant $c > 0$ such that
\begin{align}\label{eq:proof}
\E\bigl|X_m^\class(t_{k+1}) - X^\bit_{m,q}(t_{k+1})\bigr|^2
\leq
(1+c/m) \cdot \E\bigl|X_m^\class(t_k) - X^\bit_{m,q}(t_k)\bigr|^2 + 
c/m \cdot 2^{-q} \cdot q^{-1}
\end{align}
for all $m,q \in \N$ and $k=0,\ldots,m-1$.

A discrete Gronwall-inequality or a straightforward computation yields
\begin{align*}
&\E \bigl|X_m^\class(t_{k+1}) - X^\bit_{m,q}(t_{k+1})\bigr|^2\\
&\qquad\leq
(1 + c/m)^{k+1} \cdot \E \bigl|X_m^\class(t_0) - X^\bit_{m,q}(t_0)\bigr|^2 + 
\sum_{j=0}^k (1 + c/m)^j \cdot c/m \cdot 2^{-q} \cdot q^{-1}\\
&\qquad\leq
(1 + c/m)^m \cdot c/m \cdot (k+1) \cdot 2^{-q} \cdot q^{-1}
\end{align*}
with $c$ according to \eqref{eq:proof},
and hereby the statement for $X_m^\class - X^\bit_{m,q}$ follows.
\end{proof}

\begin{lem}\label{l:euler_vs_rbit-euler:sup}
There exists a constant $c > 0$ such that for all $m,q \in \N$ 
we have
\begin{align*}
\Bigl(\E 
\bigl\|X_m^\class - X^\bit_{m,q}\bigr\|^2\Bigr)^{1/2}
\leq c \cdot 2^{-q/2} \cdot q^{-1/2}.
\end{align*}
\end{lem}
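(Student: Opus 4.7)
The plan is to reduce the supremum norm estimate to a maximum over the discrete grid and then combine Doob's martingale maximal inequality with discrete Gronwall. Since both $X_m^\class$ and $X_{m,q}^\bit$ are by construction piecewise linear on the common breakpoints $k/m$, their difference is also piecewise linear on these breakpoints, and thus its supremum norm equals the maximum of $|Z_k|$ over $k=0,\dots,m$, where $Z_k := X_m^\class(t_{k,m}) - X_{m,q}^\bit(t_{k,m})$. It therefore suffices to bound $\bigl(\E\max_{k \leq m}|Z_k|^2\bigr)^{1/2}$.

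Next I would reuse the one-step decomposition from the proof of Lemma~\ref{l:euler_vs_rbit-euler} and write $Z_{k+1} = Z_k + A_k + B_k$ with drift part $A_k := m^{-1}\bigl(a(X_m^\class(t_k)) - a(X_{m,q}^\bit(t_k))\bigr)$ and diffusion part $B_k := b(X_m^\class(t_k)) V_{k+1,m}^\class - b(X_{m,q}^\bit(t_k)) V^\bit_{k+1,m,q}$. Setting $\mathcal F_k := \sigma(V_{1,m}^\class,\dots,V_{k,m}^\class)$ and noting that $V^\bit_{j,m,q}$ is a deterministic function of $V_{j,m}^\class$, both $V_{k+1,m}^\class$ and $V^\bit_{k+1,m,q}$ are independent of $\mathcal F_k$ and have mean zero by Remark~\ref{rem:prop}(b); consequently $U_k := \sum_{j=0}^{k-1} B_j$ is an $L^2$-martingale. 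Doob's maximal inequality and orthogonality of martingale differences yield $\E\max_{k \leq m}|U_k|^2 \leq 4 \sum_{j=0}^{m-1}\E|B_j|^2$. The bound \eqref{g2} derived inside that earlier proof (with $\zeta = B_k$), combined with the pointwise estimate $\E|Z_j|^2 \leq c \cdot 2^{-q} q^{-1}$ from Lemma~\ref{l:euler_vs_rbit-euler}, controls each $\E|B_j|^2$ by $O(m^{-1} \cdot 2^{-q} q^{-1})$, and summing over $j$ gives $\E\max_{k \leq m}|U_k|^2 = O(2^{-q} q^{-1})$.

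To transfer this martingale bound to $Z_k$ itself, I would use the pathwise inequality $|Z_k| \leq (\Lcon/m)\sum_{j=0}^{k-1}|Z_j| + |U_k|$, which follows from $Z_0 = 0$, the Lipschitz continuity of $a$, and the triangle inequality. Applying the discrete Gronwall lemma to the running maximum $R_k := \max_{j \leq k}|Z_j|$ then gives $R_m \leq (1+\Lcon/m)^m \max_{k \leq m}|U_k| \leq \ee^{\Lcon} \max_{k \leq m}|U_k|$, and squaring and taking expectations closes the estimate. The main obstacle is the martingale verification, specifically that $\E(B_k \mid \mathcal F_k) = 0$: this rests on the measurability of $V^\bit_{k+1,m,q}$ with respect to $\sigma(V_{k+1,m}^\class)$ and on both increments being mean zero, which together with independence of $\mathcal F_k$ yields the cancellation. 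Once this is in place the rest is a standard Doob-plus-Gronwall argument, and the exponential factor $\ee^{\Lcon}$ is harmlessly absorbed into the constant, so passing from the pointwise bound of Lemma~\ref{l:euler_vs_rbit-euler} to the supremum bound introduces no extra $\log m$ factor.
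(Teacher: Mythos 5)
Your proposal is correct and follows essentially the same route as the paper: reduce the supremum norm to the maximum over the grid points via piecewise linearity, identify the diffusion sum as an $L^2$-martingale (using that $V^\bit_{k,m,q}$ is a measurable function of $V^\class_{k,m}$ together with the zero-mean and moment properties of Remark~\ref{rem:prop}), apply Doob's maximal inequality, and invoke the pointwise bound of Lemma~\ref{l:euler_vs_rbit-euler} together with the estimate \eqref{g2}. The only differences are bookkeeping: you absorb the drift term by a pathwise discrete Gronwall argument and bound $\E|U_m|^2$ by the orthogonality sum $\sum_j \E|B_j|^2$, whereas the paper bounds the drift maximum directly by Cauchy--Schwarz at the level of expectations and estimates $\E|R_m|^2$ through the identity $R_m = \bigl(X_m^\class(t_m)-X^\bit_{m,q}(t_m)\bigr) - Z_m$; both variants yield the same conclusion with a constant depending only on $\Lcon$.
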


\begin{proof}
This proof uses standard martingale arguments to exchange the maximum 
with the expectation in the error bound of 
Lemma~\ref{l:euler_vs_rbit-euler}.

It suffices to consider the error at the point $t_k$,
since $X_m^\class$ and $X^\bit_{m,q}$ are piecewise linear with
breakpoints $t_k$.
For $k=1,\ldots,m$ we have 
\begin{align}\label{eq:proof:1}
X_m^\class(t_k) - X^\bit_{m,q}(t_k)
&=
m^{-1} \cdot \sum_{\ell=1}^k 
\Bigl(a(X_m^\class(t_{\ell-1})) - a(X^\bit_{m,q}(t_{\ell-1}))\Bigr)\\
&\qquad\mbox{}+  
\sum_{\ell=1}^k \Bigl(b(X_m^\class(t_{\ell-1})) \cdot 
V_{\ell,m}^\class - b(X^\bit_{m,q}(t_{\ell-1})) \cdot
V^\bit_{\ell,m,q}\Bigr),\notag
\end{align}
and therefore
\[
\E \max_{k=0,\ldots,m} \bigl|X_m^\class(t_k) - X^\bit_{m,q}(t_k)\bigr|^2
\leq
2 \cdot \E \max_{k=1,\ldots,m} \bigl| Z_k \bigr|^2
+ 2 \cdot \E \max_{k=1,\ldots,m} \bigl| R_k \bigr|^2
\]
with
\[
Z_k =
m^{-1} \cdot \sum_{\ell=1}^k 
\Bigl(a(X_m^\class(t_{\ell-1})) - a(X^\bit_{m,q}(t_{\ell-1}))\Bigr)
\]
and
\[
R_k =
\sum_{\ell=1}^k 
\Bigl(b(X_m^\class(t_{\ell-1})) \cdot V_{\ell,m}^\class 
- b(X^\bit_{m,q}(t_{\ell-1})) \cdot V^\bit_{\ell,m,q}\Bigr).
\]

For the drift term we have
\begin{align}\label{g3}
\E \max_{k=1,\ldots,m} \bigl|Z_k\bigr|^2
&\leq
\E \max_{k=1,\ldots,m} \frac{k}{m^2} \sum_{\ell=1}^k 
\bigl|a(X_m^\class(t_{\ell-1})) - a(X^\bit_{m,q}(t_{\ell-1}))\bigr|^2\\
&\leq
\frac{\Lcon^2}{m} \cdot 
\sum_{\ell=1}^m \E 
\bigl|X_m^\class(t_{\ell-1}) - X^\bit_{m,q}(t_{\ell-1})\bigr|^2
\notag \\
&\leq
\Lcon^2 \cdot \max_{k=0,\ldots,m} 
\E \, \bigl|X_m^\class(t_k) - X^\bit_{m,q}(t_k)\bigr|^2.
\notag
\end{align}
For the diffusion term, we note that $(R_k)_{k=1,\ldots,m}$ 
is a martingale due to 
properties (b) and (c) from Remark~\ref{rem:prop}.
Consequently, the Doob maximal inequality yields
\begin{align*}
\E \max_{k=1,\ldots,m} |R_k|^2
\leq
4 \cdot \E |R_m|^2.
\end{align*}
Use \eqref{eq:proof:1} and \eqref{g3} to obtain 
\begin{align*}
\E |R_m|^2
&= \E \bigl|X_m^\class(t_m) - X^\bit_{m,q}(t_m) - Z_m \bigr|^2\\
&\leq
2 \cdot \E \bigl|X_m^\class(t_m) - X^\bit_{m,q}(t_m)\bigr|^2 + 
2 \cdot \E \bigl|Z_m\bigr|^2\\
&\leq
2 \cdot (1 + \Lcon^2) \cdot \max_{k=0,\ldots,m}
\E \bigl|X_m^\class(t_k) - X^\bit_{m,q}(t_k)\bigr|^2.
\end{align*}
We conclude that
\[
\E \max_{k=0,\ldots,m} \bigl|X_m^\class(t_k) - X^\bit_{m,q}(t_k)\bigr|^2
\leq
18\cdot(1+\Lcon^2) \cdot \max_{k=0,\ldots,m}
\E \bigl|X_m^\class(t_k) - X^\bit_{m,q}(t_k)\bigr|^2.
\]
Apply Lemma~\ref{l:euler_vs_rbit-euler} to derive the statement
for $X_m^\class-X^\bit_{m,q}$.
\end{proof}

\begin{rem}
Let $\Delta$ denote the Wasserstein distance of order two
on the space of all Borel probability measures on a separable Banach 
space. For any such measure $\mu$ the asymptotics of 
\begin{align*}
\rbit(\mu,p) = 
\inf\{ \Delta(\mu, \nu) \colon 
\nu\text{ uniform distribution with support of size } 2^p\},
\qquad
p\in\N,
\end{align*}
have been recently studied in \citet{C18,GHMR17,XB17,Berger2018,
2018arXiv180907891B}.
Combining Lemma~\ref{l:euler_vs_rbit-euler:sup} with 
\eqref{g5} we get the existence of a constant $c>0$ such that
\begin{align}\label{ineq}
\rbit(\mu,p)
\leq
c\cdot\min_{d\cdot m\cdot q\leq p}\left(m^{-1/2} \cdot (\ln(m+1))^{1/2}
+ 2^{-q/2} \cdot q^{-1/2}\right)
\end{align}
for all $p\in\N$,
where $\mu$ is the distribution of the solution $X$ on the 
Banach space $C([0,1],\R^r)$. It can be shown that the right hand 
side of \eqref{ineq} is of the order $p^{-1/2}\cdot \ln(p)$.
In the scalar case, i.e., $r=d=1$,
and under a slightly stronger smoothness assumption on the coefficients 
$a,b$ as well as a non-degeneracy assumption on the diffusion 
coefficients $b$ of the SDE, which in particular exclude pathological 
cases yielding a deterministic solution,
it has been shown in \citet[Thm.~4]{GHMR17} that
$\rbit(\mu,p)$
is of the order $p^{-1/2}$ for the Banach space $L_2[0,1]$.
Hence the upper bound obtained via \eqref{ineq} is sharp, up to 
logarithmic factors, but matching upper and lower bounds seem to be
unknown in this case.
\end{rem}

Following the proofs of Lemma~\ref{l:euler_vs_rbit-euler}
and Lemma~\ref{l:euler_vs_rbit-euler:sup}, one may establish
analogous results for the difference
$\tX_{m/2}^\class - \tX^\bit_{m/2,q}$.
We only formulate the analogue to Lemma~\ref{l:euler_vs_rbit-euler:sup}.

\begin{lem}\label{lem3}
There exists a constant $c > 0$ such that for 
every $q \in \N$ and every even $m\in \N$ 
we have
\begin{align*}
\Bigl(\E 
\bigl\|\tX_{m/2}^\class - \tX^\bit_{m/2,q}\bigr\|^2\Bigr)^{1/2}
\leq c \cdot 2^{-q/2} \cdot q^{-1/2}.
\end{align*}
\end{lem}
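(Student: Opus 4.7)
The plan is to mirror the proofs of Lemmas~\ref{l:euler_vs_rbit-euler} and~\ref{l:euler_vs_rbit-euler:sup} line by line, with $m$ replaced by $m/2$ and with the coupled increments $\tV^\bit_{k,m/2,q}$ taking the place of the fine random-bit increments. Note that $\tV^\class_{k,m/2} = V^\class_{2k,m} + V^\class_{2k-1,m}$ is simply the Brownian increment over the coarse step, so $\tX_{m/2}^\class$ coincides with the classical Euler scheme with $m/2$ steps; in particular its second moments are standard. The genuinely new object is $\tX^\bit_{m/2,q}$, whose increments are sums of two quantized normal vectors rather than a single one.

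The first step would be to check that $(\tV^\bit_{k,m/2,q})_{k=1,\ldots,m/2}$ inherits the analogues of properties (a)--(c) from Remark~\ref{rem:prop}, now with respect to the coarse step size $(m/2)^{-1}$. Independence across $k$ follows from (a) applied to the fine increments, because the pairs $\{2k-1,2k\}$ are disjoint. Linearity together with (b) gives $\E(\tV^\bit_{k,m/2,q}) = 0$, and the triangle inequality in $L^2$ combined with (b) yields
\[
\bigl(\E\bigl|\tV^\class_{k,m/2} - \tV^\bit_{k,m/2,q}\bigr|^2\bigr)^{1/2}
\leq
\sum_{j\in\{2k-1,2k\}}
\bigl(\E\bigl|V^\class_{j,m}-V^\bit_{j,m,q}\bigr|^2\bigr)^{1/2}
\leq
c' \cdot (m/2)^{-1/2} \cdot 2^{-q/2} \cdot q^{-1/2},
\]
after the factor $\sqrt{2}$ is absorbed into the constant. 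A similar use of Minkowski's inequality transfers the moment bound in (b) to $(m/2)^{1/2} (\E|\tV^\bit_{k,m/2,q}|^r)^{1/r}$ uniformly in $m,k,q$, and $\E\|\tX^\bit_{m/2,q}\|^2 < \infty$ then follows by induction on the coarse Euler recursion.

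Once these ingredients are in place, I would run the drift/diffusion decomposition of the proof of Lemma~\ref{l:euler_vs_rbit-euler} for the coupled scheme. The independence and zero-mean properties again kill the cross term $\E(\xi_i\zeta_i)$, the Lipschitz assumption on $a$ and $b$ controls the drift and the diffusion contributions, and the coupled analogue of (b) bounds the increment mismatch. This produces a recursion of the form
\[
\E\bigl|\tX^\class_{m/2}(t_{k+1}) - \tX^\bit_{m/2,q}(t_{k+1})\bigr|^2
\leq (1 + c/(m/2)) \cdot \E\bigl|\tX^\class_{m/2}(t_k) - \tX^\bit_{m/2,q}(t_k)\bigr|^2 + c/(m/2) \cdot 2^{-q} \cdot q^{-1},
\]
and a discrete Gronwall step iterated $m/2$ times yields a pointwise bound $c\cdot 2^{-q/2}\cdot q^{-1/2}$ at each $t_{k,m/2}$, with a constant independent of $m$ because $(1+c/(m/2))^{m/2}$ is bounded. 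The passage from the pointwise bound to the supremum bound is then handled by the martingale/Doob argument of Lemma~\ref{l:euler_vs_rbit-euler:sup}: the coupled diffusion sum is a martingale thanks to independence and zero mean of the $\tV^\bit_{k,m/2,q}$, the drift sum is estimated via Cauchy--Schwarz exactly as in \eqref{g3}, and piecewise linearity between the grid points $t_{k,m/2}$ makes control at those points sufficient.

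The main obstacle is purely bookkeeping, namely keeping track of the conversion between the $m$-scaling used for the fine increments in Remark~\ref{rem:prop} and the $(m/2)$-scaling appropriate for the coarse scheme, and the inductive verification of the analogue of property (c) for $\tX^\bit_{m/2,q}$. No conceptual ingredient beyond those already used in Lemmas~\ref{l:euler_vs_rbit-euler} and~\ref{l:euler_vs_rbit-euler:sup} is required.
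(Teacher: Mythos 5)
Your proposal is correct and is essentially the paper's own argument: the paper proves Lemma~\ref{lem3} only by remarking that one follows the proofs of Lemmas~\ref{l:euler_vs_rbit-euler} and~\ref{l:euler_vs_rbit-euler:sup}, and your verification that the coupled increments $\tV^\bit_{k,m/2,q}$ inherit the analogues of properties (a)--(c) (independence over disjoint pairs, zero mean, the $L^2$ increment mismatch and moment bounds via Minkowski with the $(m/2)$-scaling) before replaying the recursion and the martingale/Doob step is exactly the intended elaboration.
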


Lemma \ref{l:euler_vs_rbit-euler:sup}
and Lemma \ref{lem3}, together with \eqref{g5}, will be used to
control the variances and the bias of multilevel algorithms in the
following section. 
Concerning the variances, a different approach is provided in 
\citet[Sec.~3.1]{BN17}.

\section{Multilevel Euler Algorithms}\label{sec5}

At first we give a general description of a 
multilevel algorithm based on either of the two Euler
schemes in Section~\ref{Euler:schemes}.
Let $L \in \N$ be the maximum level that is used by this
algorithm.
On every level $\ell=1,\ldots,L$ the algorithm involves
a fine approximation $X_{2^\ell}$ and a coarse
approximation $\tX_{2^{\ell-1}}$ according to
Section~\ref{Euler:schemes}. 
On level $\ell=0$ we only need the fine
approximation $X_{2^0}$, but we set
$\tX_{2^{-1}} = 0$
for notational convenience. Furthermore,
let $N=(N_0,\ldots,N_L)\in\N^{L+1}$ be the vector of
replication numbers on the levels $\ell=0,\ldots,L$.

To define the multilevel algorithm we consider a family 
\[
\bX_{L,N} =
\bigl((X_{2^\ell,i},
\tX_{2^{\ell-1},i})\bigr)_{\ell=0,\ldots,L, i=1,\ldots,N_\ell}
\]
of random elements
$(X_{2^\ell,i},\tX_{2^{\ell-1},i})$ 
such that
\[
\bigl(X_{2^\ell,i},\tX_{2^{\ell-1},i}\bigr)
\stackrel{\mathrm{d}}{=}
\bigl(X_{2^\ell},\tX_{2^{\ell-1}}\bigr)
\]
for 
$\ell=0,\ldots,L$ and $i=1,\ldots,N_\ell$.
The joint distribution of 
$\bX_{L,N}$
will be specified later.
The multilevel Euler algorithm finally reads as
\[
A_{L,N}(f)
=
\frac{1}{N_0} \cdot \sum_{i=1}^{N_0} f\bigl(
X_{2^0,i}\bigr)
+ \sum_{\ell=1}^L \frac{1}{N_\ell} \cdot \sum_{i=1}^{N_\ell}
\bigl(f\bigl(X_{2^\ell,i}\bigr)
- f\bigl(\tX_{2^{\ell-1},i}\bigr)\bigr).
\]

Subsequently we study three different variants of this algorithm.
In the first two variants $\bX_{L,N}$ is independent,
i.e., the family $\bX_{L,N}$ of random variables $(X_{2^\ell,i},
\tX_{2^{\ell-1},i})$ is independent,
which is a standard assumption for multilevel algorithms.
In the first case we use the classical Euler scheme with 
normally distributed increments.
In the other two cases we consider
random bit
Monte Carlo algorithms.
In the second case we simply employ 
random bit approximations of the normally distributed increments.
In the third case we apply Bakhvalov's trick in order to 
reduce the number of random bits; consequently, 
$\bX_{L,N}$
is no longer independent.

\subsection{The Classical Multilevel Algorithm}\label{classicml}

In this section 
$\bX_{L,N}$
is assumed to be independent and 
\[
\bigl(X_{2^\ell},\tX_{2^{\ell-1}}\bigr)
\stackrel{\mathrm{d}}{=}
\bigl(X_{2^\ell}^\class,\tX_{2^{\ell-1}}^\class\bigr)
\]
for $\ell=0,\dots,L$, 
where $\tX_{2^{-1}}^\class = 0$ for notational convenience.
See Section~\ref{cleu}.
Hereby we get the classical multilevel Euler algorithm, which 
we denote by $A_{L,N}^\class$.

We estimate the cost of $A_{L,N}^\class$.
For that purpose we consider
the cost on level $\ell$.
The cost for the arithmetic operations, the evaluations of $a,b$ and $f$
as well as $\Phi^{-1}$
on level $\ell$ is, up to a multiplicative constant, given by 
$N_\ell \cdot 2^\ell$.
The number $r_\ell$ of calls to the random number generator on level 
$\ell$ 
is given by
\begin{align*}
r_\ell=N_\ell\cdot 2^\ell \cdot d.
\end{align*}
Consequently, there exists a constant $c > 1$
such that for all 
$L$ and $N$ the multilevel Euler algorithm satisfies
\begin{align*}
c^{-1} \cdot \sum_{\ell=0}^L 2^\ell\cdot N_\ell
\leq
\cost\bigl(A_{L,N}^\class,\Lip_1\bigr)
\leq
c \cdot \sum_{\ell=0}^L 2^\ell\cdot N_\ell.
\end{align*}

For $\eps \in {]0,1/2[}$ we consider the
algorithm 
\[
A_\eps^\class = A_{L(\eps),N(\eps)}^\class
\]
with maximal level
\[
L=
L(\eps)
=
\bigl\lceil\log_2(\eps^{-2}) + 
\log_2(\log_2(\eps^{-2}))\bigr\rceil 
\]
and with replication numbers
\[
N_\ell=
N_\ell(\eps)
=
\bigl\lceil(L+1) \cdot 2^{-\ell} \cdot \max(\ell,1) \cdot \eps^{-2}
\bigr\rceil
\]
for $\ell=0,\ldots,L$.

The following result is known, see, e.g.,
\citet[Rem.\ 8]{CDMGR09}.
For convenience of the reader we present a proof.

\begin{thm}\label{t:ML-EM}
There exists a constant $c > 1$ such that
the multilevel Euler algorithm $A_\eps^\class$
satisfies
\begin{align*}
\ee(A_\eps^\class,\Lip_1) \leq c \cdot \eps
\end{align*}
and
\begin{align*}
c^{-1} \cdot \eps^{-2} \cdot 
(\ln(\eps^{-1}))^3
\leq
\cost(A_\eps^\class,\Lip_1) \leq c \cdot \eps^{-2} \cdot 
(\ln(\eps^{-1}))^3
\end{align*}
for every $\eps \in {]0,1/2[}$.
\end{thm}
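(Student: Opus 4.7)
The plan is to separate the mean square error into squared bias plus variance of the multilevel estimator, bound each piece using the strong convergence rate \eqref{g5} of the classical Euler scheme together with Lipschitz continuity of $f$, and then insert the concrete choice of $L(\eps)$ and $N(\eps)$. Independently, the cost bounds follow by a direct computation from the cost estimate $\cost(A_{L,N}^\class,\Lip_1) \asymp \sum_{\ell=0}^L 2^\ell N_\ell$ stated in Section~\ref{classicml}.

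For the bias, I would use that $\E(A_{L,N}^\class(f)) = \E(f(X_{2^L}^\class))$ by telescoping (since $(X_{2^\ell}^\class,\tX_{2^{\ell-1}}^\class) \stackrel{\mathrm d}{=} (X_{2^\ell}^\class, X_{2^{\ell-1}}^\class)$ via \eqref{g10}), so $|S(f) - \E A_{L,N}^\class(f)| \leq \E\|X - X_{2^L}^\class\| \leq (\E\|X - X_{2^L}^\class\|^2)^{1/2}$ by Jensen and Lipschitz continuity. By \eqref{g5} this is at most $c \cdot 2^{-L/2}(\ln(2^L+1))^{1/2}$. The definition of $L(\eps)$ gives $2^L \geq \eps^{-2}\log_2(\eps^{-2})$, so the bias is $\lesssim \eps$.

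For the variance, I would exploit independence of the level summands, writing
\begin{align*}
\Var(A_{L,N}^\class(f)) = \sum_{\ell=0}^L N_\ell^{-1} \Var\bigl(f(X_{2^\ell}^\class) - f(\tX_{2^{\ell-1}}^\class)\bigr).
\end{align*}
Lipschitz continuity of $f$ bounds the $\ell$-th term by $N_\ell^{-1}\, \E\|X_{2^\ell}^\class - \tX_{2^{\ell-1}}^\class\|^2$; triangle inequality together with \eqref{g5} (applied to both $X - X_{2^\ell}^\class$ and $X - \tX_{2^{\ell-1}}^\class = X - X_{2^{\ell-1}}^\class$) gives a bound of order $2^{-\ell}\cdot \max(\ell,1)$. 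With the choice $N_\ell \geq (L+1)\cdot 2^{-\ell}\cdot \max(\ell,1)\cdot \eps^{-2}$ each summand is at most $\eps^2/(L+1)$, so the total variance is at most $\eps^2$. Combining with the bias square gives $\ee(A_\eps^\class,f)^2 \leq c\eps^2$ uniformly in $f \in \Lip_1$.

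The cost bounds are a direct arithmetic computation. The upper bound plugs $N_\ell(\eps)$ into $\sum_\ell 2^\ell N_\ell$: the $\lceil \cdot \rceil$ contributes $\sum_\ell 2^\ell \leq 2^{L+1} \lesssim \eps^{-2}\ln(\eps^{-1})$, and the main term is $(L+1)\eps^{-2}\sum_{\ell=0}^L \max(\ell,1) \lesssim L^3 \eps^{-2} \lesssim \eps^{-2}(\ln(\eps^{-1}))^3$. The matching lower bound uses the same sum without the ceiling rounding. The main (mild) obstacle is handling the $\ell=0$ level consistently with the convention $\tX_{2^{-1}}^\class = 0$ and $\max(\ell,1)$ in $N_\ell$: the variance contribution at $\ell=0$ is just $N_0^{-1}\Var(f(X_1^\class))$, which is uniformly bounded in $f\in\Lip_1$ because $\E\|X_1^\class\|^2$ is finite, so it is absorbed into the $\eps^2/(L+1)$ bookkeeping; verifying the logarithmic factors in $L(\eps)$ and $N_\ell(\eps)$ align exactly to kill the extra $\ln$ from \eqref{g5} in the bias is the only estimate requiring some care.
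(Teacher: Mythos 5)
Your proposal is correct and follows essentially the same route as the paper's proof: a bias--variance decomposition, with the bias controlled by the telescoping property $\tX_{2^{\ell-1}}^\class \stackrel{\mathrm d}{=} X_{2^{\ell-1}}^\class$ and the strong error bound \eqref{g5}, the variance controlled levelwise by independence, Lipschitz continuity, and \eqref{g5}, and the cost bounds obtained by inserting $L(\eps)$, $N(\eps)$ into $\sum_{\ell} 2^\ell N_\ell$ as in \eqref{g4}. The only difference is that you spell out some bookkeeping (the $\ell=0$ term and the cancellation of the logarithm in the bias) that the paper leaves implicit.
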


\begin{proof}
We show that both the squared bias and the variance of $A_\eps^\class$
are bounded by $\eps^2$, up to a constant.
Use \eqref{g5} to obtain a constant $c_1>0$ such that
\[
\E\|X-X_{2^\ell}^\class\|^2
\leq
c_1 \cdot 2^{-\ell} \cdot \max(\ell,1)
\]
for every $\ell \geq 0$ and $\E\|X_{2^0}\|^2 \leq c_1$.
Hence we get
\[
\left|S(f) - \E ( A_{L,N}^\class (f)) \right|^2=
\left|\E(f(X)) - \E ( f(X_{2^L}^\class)) \right|^2 \leq
c_1 \cdot 2^{-L} \cdot L
\]
for all $L$ and $N$ and for every $f \in \Lip_1$.
With $L=L(\eps)$ this upper bound is of the order $\eps^{2}$,
as claimed. For the variance we get
\[
\Var(A_{L,N}^\class(f))
\leq
6c_1 \cdot \sum_{\ell=0}^L \frac{\max(\ell,1)}{2^{\ell} \cdot N_\ell}
\]
for all $L$ and $N$ and for every $f \in \Lip_1$.
With $N = N(\eps)$
and $L = L(\eps)$
this upper bound is of the order $\eps^{2}$, too.

To derive the cost bounds 
for $A_\eps^\class$ it remains to
observe that there exists a constant $c_2>1$ such that
\begin{equation}\label{g4}
c_2^{-1} \cdot \eps^{-2} \cdot 
(\ln(\eps^{-1}))^3
\leq
\sum_{\ell=0}^{L(\eps)}  N_\ell(\eps) \cdot 2^\ell
\leq c_2 \cdot \eps^{-2} \cdot 
(\ln(\eps^{-1}))^3
\end{equation}
for all $\eps$.
\end{proof}

\subsection{A Random Bit Multilevel Algorithm}\label{section1}

In this section we consider a multilevel Euler algorithm
that is based on random bits.
As before, we assume that 
$\bX_{L,N}$ 
is independent,
but now we take
\[
\bigl(X_{2^\ell},\tX_{2^{\ell-1}}\bigr)
\stackrel{\mathrm{d}}{=}
\bigl(X^\bit_{2^\ell,q},\tX^\bit_{2^{\ell-1},q}\bigr)
\]
for $\ell=0,\dots,L$ and $q \in \N$,
where $\tX^\bit_{2^{-1},q} = 0$ for notational convenience.
See Section~\ref{scheme1}.
The resulting algorithm is denoted by $A^\bit_{L,N,q}$.

A different construction of a
random bit multilevel algorithm, which is based on the coupling
\eqref{eq2} and which ensures that
\begin{equation}\label{eqend}
\tX_{2^{\ell-1}}
\stackrel{\mathrm{d}}{=}
X_{2^{\ell-1}}
\end{equation}
is satisfied, is
studied in \citet{BN17}.
Recall that \eqref{eqend} is violated in our construction,
except in trivial cases, see \eqref{eqend100}.

The cost of $A^\bit_{L,N,q}$ can be bounded as in Section~\ref{classicml}.
In fact, the number $r_\ell$ of calls to the random number generator on 
level $\ell$ is given by
\begin{align*}
r_\ell=N_\ell \cdot 2^\ell \cdot d\cdot q.
\end{align*}
Furthermore, there exists a constant $c>1$ such that for all $L$, $N$, 
and $q$ the multilevel Euler algorithm satisfies
\begin{align*}
c^{-1} \cdot q\cdot \sum_{\ell=0}^L 2^\ell\cdot N_\ell
\leq
\cost\bigl(A^\bit_{L,N,q},\Lip_1\bigr)
\leq
c \cdot q\cdot \sum_{\ell=0}^L 2^\ell\cdot N_\ell.
\end{align*}

For $\eps \in {]0,1/2[}$ we choose the bit number
\[
q(\eps) = L(\eps)
\]
and $L(\eps)$ as well as $N(\eps)$ as in Section~\ref{classicml}
to obtain the algorithm 
\[
A^\bit_\eps = A^\bit_{L(\eps),N(\eps),q(\eps)}.
\]

\begin{thm}\label{theo2}
There exists a constant $c>1$ such that 
the random bit multilevel Euler algorithm $A^\bit_\eps$ satisfies
\begin{align*}
\ee\bigl(A^\bit_\eps,\Lip_1\bigr)
\leq c \cdot \eps
\end{align*}
and
\begin{align*}
c^{-1} \cdot \eps^{-2} \cdot (\ln(\eps^{-1}))^4
\leq
\cost\bigl(A^\bit_\eps,\Lip_1\bigr)
\leq
c \cdot \eps^{-2} \cdot (\ln(\eps^{-1}))^4
\end{align*}
for every $\eps \in {]0,1/2[}$.
\end{thm}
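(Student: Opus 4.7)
The plan has three parts. First, I would establish the cost bounds immediately from the formula $\cost(A^\bit_{L,N,q},\Lip_1) \asymp q \cdot \sum_{\ell=0}^L 2^\ell N_\ell$ stated above: plugging in $q(\eps)=L(\eps)\sim\ln(\eps^{-1})$ and using the bounds \eqref{g4} of Theorem~\ref{t:ML-EM} gives matching $\eps^{-2}(\ln(\eps^{-1}))^4$ estimates. The rest of the argument decomposes the mean squared error $\ee(A^\bit_\eps,f)^2$ into bias squared plus variance and verifies that each is $O(\eps^2)$.

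The bias is the delicate part, and this is where I expect the main obstacle. Because of \eqref{eqend100} the usual multilevel telescoping fails, but taking expectations and rearranging yields
\[
\E\bigl(A^\bit_{L,N,q}(f)\bigr) = \E\bigl(f(X^\bit_{2^L,q})\bigr) + \sum_{\ell=1}^L\Bigl(\E\bigl(f(X^\bit_{2^{\ell-1},q})\bigr) - \E\bigl(f(\tX^\bit_{2^{\ell-1},q})\bigr)\Bigr).
\]
I would handle the first term by inserting $X^\class_{2^L}$ and using $|\E f(U)-\E f(V)| \leq \|U-V\|_{L^2}$ (valid since $f\in\Lip_1$) together with \eqref{g5} and Lemma~\ref{l:euler_vs_rbit-euler:sup}. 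Each distribution-mismatch term in the sum would be treated the same way, this time inserting the common classical scheme $X^\class_{2^{\ell-1}}=\tX^\class_{2^{\ell-1}}$ to get $O(2^{-q/2}q^{-1/2})$ per term by Lemmas~\ref{l:euler_vs_rbit-euler:sup} and~\ref{lem3}. The crucial quantitative balance is that $L(\eps)$ is chosen so that $2^L \geq \eps^{-2}\log_2(\eps^{-2})$, whence $2^{-L/2}\leq \eps/\sqrt{L}$; the choice $q=L$ then makes each mismatch $O(\eps/L)$, so the sum of $L$ such terms is still $O(\eps)$. If $q$ were much smaller this extra factor of $L$ would spoil the rate.

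For the variance, independence of $\bX_{L,N}$ yields the standard level decomposition. The $\ell=0$ contribution is $O(\eps^2)$ because $\Var(f(X^\bit_{2^0,q}))$ is bounded uniformly in $q$ (using that $f$ is $1$-Lipschitz and that $\E\|X^\bit_{2^0,q}-x_0\|^2$ is bounded by Remark~\ref{rem:prop}(b)), while $N_0\gtrsim L\eps^{-2}$. For $\ell\geq 1$ the $1$-Lipschitz property gives $\Var(f(X^\bit_{2^\ell,q})-f(\tX^\bit_{2^{\ell-1},q})) \leq \E\|X^\bit_{2^\ell,q}-\tX^\bit_{2^{\ell-1},q}\|^2$, and a triangle inequality through $X^\class_{2^\ell}$ and $\tX^\class_{2^{\ell-1}}=X^\class_{2^{\ell-1}}$ produces, via \eqref{g5} and Lemmas~\ref{l:euler_vs_rbit-euler:sup} and~\ref{lem3}, the bound $C(2^{-\ell}\ell + 2^{-q}q^{-1})$. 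Dividing by $N_\ell$ and summing, the first piece reproduces the classical multilevel variance $O(\eps^2)$; with $q=L$ the second piece contributes $O(\eps^2/L^2)$ via $\sum_{\ell=1}^L 2^\ell/\ell = O(2^L/L)$ and is negligible. Combining bias and variance then gives $\ee(A^\bit_\eps,\Lip_1)=O(\eps)$.
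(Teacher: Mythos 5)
Your proposal is correct, but for the error bound it takes a genuinely different route from the paper. The paper does not redo a bias--variance analysis at all: it couples each replication of $A^\bit_\eps$ with the corresponding replication of the classical estimator $A^\class_\eps$ (both being driven by the same Brownian increments, by the very definition \eqref{eq10} of the random bit scheme), bounds
\[
\Bigl(\E\bigl|A_{L,N}^\class(f)-A^\bit_{L,N,q}(f)\bigr|^2\Bigr)^{1/2}
\leq
\sum_{\ell=0}^L\Bigl(\E\bigl\|X_{2^\ell}^\class-X^\bit_{2^\ell,q}\bigr\|^2\Bigr)^{1/2}
+\sum_{\ell=1}^L\Bigl(\E\bigl\|\tX_{2^{\ell-1}}^\class-\tX^\bit_{2^{\ell-1},q}\bigr\|^2\Bigr)^{1/2}
\leq c_1\cdot L\cdot 2^{-q/2}\cdot q^{-1/2}
\]
via Lemmas~\ref{l:euler_vs_rbit-euler:sup} and~\ref{lem3}, and then invokes Theorem~\ref{t:ML-EM} and the triangle inequality; with $q=L=L(\eps)$ this perturbation is of order $\eps$, and the mismatch \eqref{eqend100} never has to be addressed explicitly. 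You instead carry out the bias--variance decomposition from scratch: your rearranged telescoping identity making the extra bias terms $\E f(X^\bit_{2^{\ell-1},q})-\E f(\tX^\bit_{2^{\ell-1},q})$ explicit is correct, inserting $X^\class_{2^{\ell-1}}=\tX^\class_{2^{\ell-1}}$ and using the two lemmas gives $O(2^{-q/2}q^{-1/2})$ per term, and your variance estimates (level $0$ via boundedness of the bit increments, levels $\ell\geq 1$ via the coupling through $X^\class_{2^\ell}$, $X$, and $X^\class_{2^{\ell-1}}$, giving $C(2^{-\ell}\ell+2^{-q}q^{-1})$ per sample) are sound, with the same balance $q=L$ making all extra contributions negligible. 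The cost bounds are handled identically in both arguments, via \eqref{g4} and $q(\eps)\asymp\ln(\eps^{-1})$. What the paper's route buys is brevity and reuse of Theorem~\ref{t:ML-EM}; what your route buys is an explicit account of where the additional bias of order $L\cdot 2^{-q/2}q^{-1/2}$ and the additional variance enter, which is essentially the structural information (only means and variances of the level corrections matter) that the paper later exploits, via \eqref{error}, for the Bakhvalov variant $A^{\stwo}_\eps$.
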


\begin{proof}
At first we establish the error bound.
Due to Theorem~\ref{t:ML-EM} it suffices to show
the existence of a constant $c>0$ such that
\begin{align*}
\sup_{f \in \Lip_1}
\Bigl(\E\bigl|
A_\eps^\class(f) - A^\bit_\eps(f)\bigr|^2\Bigr)^{1/2}
\leq
c \cdot \eps
\end{align*}
for every $\eps \in {]0,1/2[}$.
For $f \in \Lip_1$ we have
\[
\Bigl(\E\bigl|A_{L,N}^\class(f) - A^\bit_{L,N,q}(f)\bigr|^2\Bigr)^{1/2}\\
\leq
\sum_{\ell=0}^L 
\Bigl(\E
\bigl\|X_{2^\ell}^\class - X^\bit_{2^\ell,q} \bigr\|^2
\Bigr)^{1/2}
+ 
\sum_{\ell=1}^L
\Bigl(\E
\bigl\|\tX_{2^{\ell-1}}^\class -
\tX^\bit_{2^{\ell-1},q}\bigr\|^2\Bigr)^{1/2}.
\]
Apply Lemma~\ref{l:euler_vs_rbit-euler:sup}
as well as Lemma~\ref{lem3}
to obtain the existence of a
constant $c_1>0$ such that
\begin{align*}
\Bigl(\E\bigl|A_{L,N}^\class(f) - A^\bit_{L,N,q}(f)\bigr|^2\Bigr)^{1/2}
\leq
c_1 \cdot L \cdot 2^{-q/2} \cdot q^{-1/2}
\end{align*}
for all $L$, $q$, and $N$ and every $f \in \Lip_1$.
With $q=L=L(\eps)$ this upper bound is of the 
order $\eps$.

The cost bounds follow 
immediately from \eqref{g4}.
\end{proof}

\subsection{A Random Bit Multilevel Algorithm Based on Bakhvalov's Trick}\label{secbah}

In this section we consider a variant of the 
random bit multilevel algorithm $A^\bit_{L,N,q}$
from Section~\ref{section1},
which is based on Bakhvalov's trick, see \citet{MR0172463}
and also \citet{MR2076605} and the references therein.
In our case this trick yields $n^2$ pairwise independent random
variables, each of which is uniformly distributed on $D^{(q)}$, from 
$2n$ independent random variables, each of which is uniformly distributed 
on $D^{(q)}$, see Lemma~\ref{bahvalov} in Appendix~\ref{app}.
Of course, the same is true in the $d$-dimensional situation, i.e., 
with $D^{(q)}$ replaced by $(D^{(q)})^d$.

We construct a random bit multilevel Euler algorithm
$A_{L,N,q}^{\stwo}$ that achieves 
the following properties
at a reduced number of random bits compared 
to $A^\bit_{L,N,q}$:
\begin{itemize}
\item[(i)] The family $\bX_{L,N}$ is pairwise independent.
\item[(ii)] For every $\ell=0,\ldots,L$ we have
\[
\bigl(X_{2^\ell},\tX_{2^{\ell-1}}\bigr)
\stackrel{\mathrm{d}}{=}
\bigl(X^\bit_{2^\ell,q},\tX^\bit_{2^{\ell-1},q}\bigr)
\]
as in Section~\ref{section1}.
\end{itemize}

These properties already ensure that the expectations and the variances of
$A^\bit_{L,N,q}(f)$ and $A_{L,N,q}^{\stwo}(f)$
coincide for every $f\in \Lip_1$, so that
 $\ee(A^\bit_{L,N,q},f)=\ee(A_{L,N,q}^{\stwo},f)$.
In particular, 
\begin{align}\label{error}
\ee\bigl(A^\bit_{L,N,q},\Lip_1\bigr)
=\ee\bigl(A_{L,N,q}^{\stwo},\Lip_1\bigr).
\end{align}

We describe the construction of $A_{L,N,q}^{\stwo}$ 
or, equivalently, the distribution of $\bX_{L,N}$
in detail.
Let
\begin{align*}
n_\ell=\bigl\lceil{N_\ell}^{1/2}\bigr\rceil
\end{align*}
for $\ell=0,\dots,L$. Consider an independent family 
\[
\bG = 
(G_{k,2^\ell,j})_{k=1,\dots,2^\ell,\, \ell=0,\dots,L,\, j=1,\dots,2n_\ell}
\]
of random vectors $G_{k,2^\ell,j}$
that are uniformly distributed on $(D^{(q)})^d$.
For $x\in\R$ we denote by $x\mod 1$ the fractional part of $x$, i.e., 
the real number $y\in \left[0,1\right[$ that satisfies $x-y\in\Z$.
For $k=1,\dots,2^\ell$, $\ell=0,\dots,L$, and $j_1,j_2=1,\dots,n_\ell$ 
define
\begin{align*}
U_{k,2^\ell,(j_1-1)n_\ell+j_2}
=
G_{k,2^\ell,j_1}+G_{k,2^\ell,j_2+n_\ell}+
2^{-(q+1)}\cdot \boldsymbol{1} \mod 1,
\end{align*}
where the fractional part is taken in each component separately
and $\boldsymbol{1}\in\R^d$ denotes the vector with all components equal 
to $1$. 
Let
\[
\bU_\ell = 
(U_{k,2^\ell,i})_{k=1,\dots,2^\ell, i=1,\dots,N_\ell}
\]
for $\ell=0,\dots,L$.
Obviously, the following holds true:
\begin{itemize}
\item[(I)] The family $(\bU_\ell)_{\ell=0,\dots,L}$ is independent.
\item[(II)] For all $\ell=0,\dots,L$ and
$i=1,\dots,N_\ell$ the 
family $(U_{k,2^\ell,i})_{k=1,\dots,2^\ell}$ is independent.
\end{itemize}
Let
\[
\bU_{\ell,i} = 
(U_{k,2^\ell,i})_{k=1,\dots,2^\ell}
\]
for $\ell=0,\dots,L$ and $i=1,\dots,N_\ell$.
Lemma~\ref{bahvalov} from Appendix~\ref{app} yields:
\begin{itemize}
\item[(III)] For every $\ell=0,\dots,L$ the family 
$(\bU_{\ell,i})_{i=1,\dots,N_\ell}$ is 
pairwise independent.
\item[(IV)] For all $k=1,\dots,2^\ell$, $\ell=0,\dots,L$, and 
$i=1,\dots,N_\ell$ the random vector $U_{k,2^\ell,i}$ is uniformly 
distributed on $(D^{(q)})^d$.
\end{itemize}

For $k=1,\dots,2^\ell$, $\ell=0,\dots,L$, and $i=1,\dots,N_\ell$ let
\begin{align*}
V_{k,2^\ell,i}
=2^{-\ell/2}\cdot \Phi^{-1}\left(U_{k,2^\ell,i}\right),
\end{align*}
where the function $\Phi^{-1}$ is applied to each of the components 
of the $d$-dimensional random vector $U_{k,2^\ell,i}$ separately,
cf.\ \eqref{eq10}, and
for $k=1,\dots,2^{\ell-1}$, $\ell=1,\dots,L$, and $i=1,\dots,N_\ell$ let
\begin{align*}
\tV_{k,2^{\ell-1},i}
= V_{2k,2^\ell,i} + V_{2k-1,2^\ell,i},
\end{align*}
cf.\ \eqref{eq2}.
Finally, for $\ell=0,\ldots,L$ and $i=1,\ldots,N_\ell$ the random element 
$X_{2^\ell,i}$ is given by the Euler scheme using the 
increments $(V_{k,2^\ell,i})_{k=1,\dots,2^\ell}$,
for $\ell=1,\ldots,L$ and $i=1,\ldots,N_\ell$ the random element 
$\tX_{2^{\ell-1},i}$ is given by the Euler scheme using the increments 
$(\tV_{k,2^{\ell-1},i})_{k=1,\dots,2^{\ell-1}}$,
and for $\ell=0$ and $i=1,\ldots,N_\ell$ we set  
$\tX_{2^{\ell-1},i}=0$.

Observe that (I) and (III) imply (i).
Furthermore, (II) and (IV) imply
\[
(V_{k,2^\ell,i})_{k=1,\dots,2^\ell} 
\stackrel{\mathrm{d}}{=}
(V^\bit_{k,2^\ell,q})_{k=1,\dots,2^\ell} 
\]
for $\ell = 0, \dots,L$ and $i=1,\dots, N_\ell$,
so that we also have (ii).

The cost of $A_{L,N,q}^{\stwo}$ can be bounded as in 
Sections~\ref{classicml} and \ref{section1}.
The number of calls to the random number generator
is the number of random bits needed to simulate the
distribution of $\bG$.
Therefore it is given by
$\sum_{\ell=0}^L r_\ell$ with
\begin{align*}
r_\ell=2n_\ell\cdot 2^\ell\cdot q\cdot d.
\end{align*}
We add that $r_\ell$ is the number
of calls to the random number generator on level 
$\ell$. Furthermore,
there exists a constant $c>1$ such that for all $L$, $N$, and $q$
the multilevel Euler algorithm satisfies
\begin{align*}
c^{-1} \cdot \sum_{\ell=0}^L \left(
2^\ell\cdot N_\ell
+
n_\ell\cdot 2^\ell\cdot q
\right)
\leq
\cost\bigl(A_{L,N,q}^{\stwo},\Lip_1\bigr)
\leq
c \cdot \sum_{\ell=0}^L \left(
2^\ell\cdot N_\ell
+
n_\ell\cdot 2^\ell\cdot q
\right).
\end{align*}

For $\eps \in {]0,1/2[}$ we consider the algorithm 
\[
A_\eps^{\stwo} = A_{L(\eps),N(\eps),q(\eps)}^{\stwo}
\]
with $L(\eps)$ and $N(\eps)$ as in Section~\ref{classicml}
and $q(\eps)$ as in Section~\ref{section1}.

\begin{thm}\label{theo3}
There exists a constant $c>1$ such that 
the random bit multilevel Euler algorithm $A_\eps^{\stwo}$ satisfies
\begin{align}\label{thm8error}
\ee\bigl(A_\eps^{\stwo},\Lip_1\bigr)
\leq c \cdot \eps
\end{align}
and
\begin{align*}
c^{-1} \cdot \eps^{-2} \cdot (\ln(\eps^{-1}))^3
\leq
\cost\bigl(A_\eps^{\stwo},\Lip_1\bigr)
\leq
c \cdot \eps^{-2} \cdot (\ln(\eps^{-1}))^3
\end{align*}
for every $\eps \in {]0,1/2[}$.
\end{thm}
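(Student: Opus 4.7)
The plan is to split the statement into the error bound \eqref{thm8error} and the two-sided cost bound, both of which reduce to results already assembled in the paper. For the error bound, I would invoke the identity \eqref{error} recorded just before the theorem, noting that properties (i) and (ii) of the construction guarantee that, for any $f \in \Lip_1$, the random variables $A^\bit_{L,N,q}(f)$ and $A_{L,N,q}^{\stwo}(f)$ have the same mean and the same variance, so their root-mean-square errors on $\Lip_1$ coincide. Combining this identity with the choices $L=L(\eps)$, $N=N(\eps)$, $q=q(\eps)$ and with the error bound $\ee(A^\bit_\eps,\Lip_1) \leq c \eps$ from Theorem~\ref{theo2} yields \eqref{thm8error} immediately.

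For the cost upper bound, I would start from the general estimate
\[
\cost\bigl(A^\stwo_{L,N,q},\Lip_1\bigr) \leq c \cdot \sum_{\ell=0}^L \bigl( 2^\ell \cdot N_\ell + n_\ell \cdot 2^\ell \cdot q \bigr)
\]
proved just before the theorem. The first contribution $\sum_\ell 2^\ell \cdot N_\ell$ is of order $\eps^{-2}(\ln(\eps^{-1}))^3$ by \eqref{g4}. For the Bakhvalov contribution, substituting the explicit form of $N_\ell(\eps)$ yields
\[
n_\ell = \bigl\lceil N_\ell(\eps)^{1/2} \bigr\rceil \leq c \cdot (L+1)^{1/2} \cdot \max(\ell,1)^{1/2} \cdot 2^{-\ell/2} \cdot \eps^{-1} + 1,
\]
and since the geometric-like sum $\sum_{\ell=0}^L 2^{\ell/2} \cdot \max(\ell,1)^{1/2}$ is of order $2^{L/2} \cdot L^{1/2}$ and $2^{L/2} \leq c \cdot \eps^{-1} \cdot L^{1/2}$ by the definition of $L(\eps)$, a short computation yields
\[
\sum_{\ell=0}^L n_\ell \cdot 2^\ell \cdot q \leq c \cdot q \cdot L^{3/2} \cdot \eps^{-2} + c \cdot q \cdot 2^L.
\]
Plugging in $q = L(\eps)$, which is of order $\ln(\eps^{-1})$, produces an upper bound of order $\eps^{-2} \cdot (\ln(\eps^{-1}))^{5/2}$, which is asymptotically negligible compared to the arithmetic contribution. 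For the cost lower bound, the same general cost estimate gives $\cost(A_\eps^\stwo,\Lip_1) \geq c^{-1} \sum_{\ell=0}^L 2^\ell \cdot N_\ell$, and the lower bound in \eqref{g4} closes the argument.

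The main technical point is the control of the Bakhvalov term $\sum n_\ell \cdot 2^\ell \cdot q$: one must verify that the square-root saving $n_\ell = \lceil N_\ell^{1/2} \rceil$ afforded by Bakhvalov's trick is large enough to guarantee properties (i)--(iv) (so that (i) and (ii) of the algorithm hold), yet small enough that the random-bit cost stays below the arithmetic cost of order $\eps^{-2}(\ln(\eps^{-1}))^3$. Once this has been checked, the rest of the proof is a routine transfer of the estimates from Theorem~\ref{theo2} and \eqref{g4}.
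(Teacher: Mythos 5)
Your proposal is correct and follows essentially the same route as the paper: the error bound is transferred from Theorem~\ref{theo2} via the identity \eqref{error}, and the cost bounds come from the general two-sided cost estimate for $A^{\stwo}_{L,N,q}$ together with \eqref{g4} and the observation that the Bakhvalov term $q\cdot\sum_{\ell} n_\ell\cdot 2^\ell$ is only of order $\eps^{-2}(\ln(\eps^{-1}))^{5/2}$, hence dominated by the arithmetic contribution of order $\eps^{-2}(\ln(\eps^{-1}))^{3}$. Your explicit estimate of $\sum_\ell n_\ell\cdot 2^\ell$ just spells out the computation the paper states as an observation, so there is nothing missing.
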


\begin{proof}
The error bound follows directly from \eqref{error} and 
Theorem~\ref{theo2}.

The cost bounds follow 
directly from \eqref{g4} and the following 
observation.
Let $n_\ell(\eps) = \bigl\lceil N^{1/2}_\ell(\eps) \bigr\rceil$.
Then there exists a constant $c>1$ such that 
\begin{align*}
c^{-1} \cdot \eps^{-2} \cdot (\ln(\eps^{-1}))^{3/2}
\leq 
\sum_{\ell=0}^{L(\eps)}
n_\ell(\eps) \cdot 2^\ell
\leq 
c \cdot \eps^{-2} \cdot (\ln(\eps^{-1}))^{3/2}
\end{align*}
for all $\eps$.
\end{proof}

Up to a multiplicative constant the error and cost bounds for the
algorithm $A_\eps^{\stwo}$, which uses only random bits, and the
classical multilevel algorithm $A_\eps^\class$, 
which uses random numbers from $[0,1]$, coincide.
A proper comparison, based on lower bounds,
of the corresponding classes of randomized
algorithms is given in Section \ref{s53}.

\begin{rem}\label{cr2}
We sketch 
an algorithm that uses asymptotically fewer random bits 
than the algorithm $A_\eps^{\stwo}$,
but still has the desired 
distributional properties (i) and (ii).
However, the overall cost will not be improved.
The idea is to use the following variant of Lemma~\ref{bahvalov}.
Consider an independent family $(G_{i,j})_{i=1,2,\, j=1,\dots,n}$ of 
random variables that are uniformly distributed on $D^{(q)}$.
Then the family 
$(G_{i_1,1}+G_{i_2,2}+\dots+G_{i_n,n}+
2^{-(q+1)}\cdot(n-1) \mod 1)_{i_1=1,2,\dots,i_n=1,2}$
is pairwise independent with each random variable being uniformly 
distributed on $D^{(q)}$.
Proceeding similar to the construction of $A_{L,N,q}^{\stwo}$, 
we obtain an algorithm 
that only needs $d\cdot \sum_{\ell=0}^{L}2^\ell\cdot q\cdot 2\hat n_\ell$
random bits, where 
\[
\hat n_\ell=\lceil\log_2{N_\ell}\rceil
\]
if $N_\ell\geq 2$ and $\hat n_\ell=1/2$ if $N_\ell=1$.
With the choice of the parameters $L,N,q$ as above this number of
bits is of the order
$\eps^{-2} \cdot (\ln(\eps^{-1}))^2 \cdot \ln(\ln(\eps^{-1}))$.
See Appendix~\ref{apprem10} for the upper bound, and observe that
the number of bits needed on the highest level is already of this order.

Recall that the number of random bits needed for the algorithm 
$A_\eps^{\stwo}$ is of the order
$\eps^{-2} \cdot (\ln(\eps^{-1}))^{5/2}$,
see the proof of Theorem~\ref{theo3},
and the number of random bits needed for one path on the finest 
level is given by $d\cdot q\cdot 2^L$, which is of the order 
$\eps^{-2} \cdot (\ln(\eps^{-1}))^{2}$.
\end{rem}

\section{Lower Bounds for Random Bit Monte Carlo Algorithms}\label{bounds}

We derive a general lower bound for
random bit 
Monte Carlo algorithms, which is applied to the quadrature problem for 
SDEs. At first, we modify a basic setting from information-based complexity
in order to formally introduce the class of all
random bit 
Monte Carlo algorithms, cf.\ \citet{H18}.

\subsection{Random Bit Monte Carlo Algorithms}\label{ls1}

Consider a non-empty class $F$ of real-valued functions on a set
$\fX \neq \emptyset$ and a mapping 
\[
S \colon F \to \R,
\]
which is to be approximated. By assumption, a
random bit
Monte Carlo algorithm has access to the functions $f \in F$ via an
oracle (subroutine) that provides function values $f(x)$ for points
$x \in \fX$ and to an ideal generator for random bits.
This generator is modelled by the probability space 
$(\Omega,\mathfrak{A},P)$, where 
\[
\Omega = \{0,1\}^\N
\]
and where $\mathfrak{A}$ and $P$ denote the product
$\sigma$-algebra and the product measure of 
the power set of $\{0,1\}$ and the uniform distribution on
$\{0,1\}$, respectively. 
The cost per evaluation of $f \in F$
is modelled by a mapping 
\[
\cc \colon \fX \to \N\cup\{\infty\},
\]
see \citet[Sec.~2]{CDMGR09}. 

For any function $f \in F$, its sequential evaluation,
which may be interlaced with calls to the random bit generator,
is formally defined by a sequence of mappings
\[
\psi_1 \colon \{0,1\} \to \fX
\]
and
\[
\psi_\ell \colon \{0,1\}^\ell \times \R^{\ell-1} \to \fX 
\]
with $\ell \geq 2$. For every $f \in F$, the first step consists
of a call to the random bit generator, which yields a bit
$\omega_1 \in \{0,1\}$, followed by the evaluation of $f$ at 
$\psi_1(\omega_1) \in \fX$.
After $n$ steps $n$ bits $\omega_1,\dots,\omega_n \in \{0,1\}$
have been obtained, and the function values
\[
y_1 = f(\psi_1(\omega_1))
\]
and
\[
y_\ell =
f(\psi_\ell(\omega_1,\dots,\omega_\ell,y_1,\dots,y_{\ell-1}))
\]
with $\ell = 2,\dots,n$ are known. Trivially, 
$\omega \mapsto (y_1,\dots,y_n)$ yields a measurable mapping
from $\Omega$ to $\R^n$ for all $f \in F$ and $n \in \N$.
A decision to stop or to further
evaluate $f$ is made after each step. This is formally described by
a sequence of mappings
\[
\tau_\ell \colon \{0,1\}^\ell \times \R^{\ell} \to \{0,1\} 
\]
with $\ell \geq 1$, and the total number $n(\omega,f)$ of
evaluations of $f$ is given by
\[
n(\omega,f) = 
\min\{\ell\in\N\colon \tau_\ell(\omega_1,\dots,\omega_\ell,
y_1,\dots,y_\ell)=1\} \in \N \cup \{\infty\}
\]
for $\omega \in \Omega$.  
Trivially, $n(\cdot,f)$ is a measurable mapping from $\Omega$ to
$\N \cup \{\infty\}$. We assume that 
\[
\forall\, f \in F \colon
P(\{n(\cdot,f) < \infty\})=1.
\]
Finally, a sequence of mappings
\[
\phi_\ell \colon \{0,1\}^\ell \times \R^\ell \to \R
\]
with $\ell \geq 1$ yields the approximation
\[
\tS(\omega,f) = \phi_{n(\omega,f)}
(\omega_1,\dots,\omega_{n(\omega,f)}, y_1,\dots,y_{n(\omega,f)})
\]
to $S(f)$ for all
$\omega \in \Omega$ and $f \in F$ with $n(\omega,f)< \infty$.
Otherwise, i.e., if $n(\omega,f)= \infty$, we put
$\tS(\omega,f)=0$. Trivially, $\tS(\cdot,f)$ is a measurable
mapping for every $f \in F$.

The tuple $\cS = ((\psi_\ell)_\ell,(\tau_\ell)_\ell,(\phi_\ell)_\ell)$ 
will be considered as a
random bit
Monte Carlo algorithm, with algorithm 
being understood in a broad sense, cf.\ Section~\ref{seccomp},
and the class of all such algorithms is denoted by $\Sb$.
For $\cS \in \Sb$ the corresponding mapping $\tS: \Omega \times F \to \R$
will be called the input-output mapping induced by $\cS$.

For $\ell \in \N$ let $\pi_\ell\colon \Omega\to\{0,1\}^\ell$ be given by
\[
\pi_\ell(\omega)=(\omega_1,\dots,\omega_\ell).
\]
The information cost and the error for applying the 
algorithm $\cS \in \Sb$ to $f \in F$ are defined by
\[
\cost_\cc (\cS,f) = 
\E 
\left(\sum_{\ell=1}^{n(\cdot,f)}
\cc(\psi_\ell(\pi_\ell(\cdot),y_1,\dots,y_{\ell-1}))
\right)
\]
and
\[
\ee (\cS,f) = 
\left(
\E
|S(f) - \tS(\cdot,f)|^2
\right)^{1/2},
\]
respectively. 
Obviously, $\cost_\cc(\cS,f)$ is also an upper bound for
the expected number $\E(n(\cdot,f))$ of random bits that are used by $\cS$,
when applied to $f$.
The worst case information cost
and the worst case error of $\cS$ are defined by
\begin{equation}\label{g21}
\cost_\cc(\cS) = 
\sup_{f \in F} 
\cost_\cc (\cS,f)
\end{equation}
and
\begin{equation}\label{g22}
\ee(\cS) = \sup_{f \in F}
\ee (\cS,f),
\end{equation}
respectively. The $\eps$-complexity for the approximation
of $S$ by means of
random bit
Monte Carlo algorithms is defined by
\[
\cb_\cc (\eps) = \inf \{ \cost_\cc(\cS) \colon
\cS \in \Sb,\ \ee(\cS) \leq \eps\}
\]
for $\eps > 0$. 

If for every $\ell \geq 1$ none of the mappings 
$\psi_\ell$, $\tau_\ell$, and $\phi_\ell$ depends
on $\omega_1,\dots,\omega_\ell$, then $\cS$ is called a
deterministic algorithm, and the corresponding proper subclass
of $\Sb$ is denoted by $\Sd$. For $\cS \in \Sd$ the expectations
may be dropped in the definitions of the worst case information
cost and error, and the $\eps$-complexity for the approximation
of $S$ by means of deterministic algorithms is defined by
\[
\cd_\cc (\eps) = \inf \{ \cost_\cc(\cS) \colon
\cS \in \Sd,\ \ee(\cS) \leq \eps\}
\]
for $\eps > 0$.

Monte Carlo algorithms from the class $\Sb$ may only use random bits.
Let us drop this constraint and consider randomized algorithms
in general.  A randomized algorithm is defined by 
any probability space $(\Omega,\mathfrak A, P)$ and a mapping
$\cS \colon \Omega \to \Sd$, which induces an input-output mapping
$\widehat{S} \colon \Omega \times F \to \R$.
Roughly speaking, $(\Omega,\mathfrak A, P)$ is the computational
probability space, which may carry any kind of random number
generator. If for a fixed $\omega \in \Omega$ all
random numbers that may potentially be used in the computation
are fixed in advance, then the randomized algorithm is turned into
the deterministic algorithm $\cS(\omega)$ with input-output mapping
$\widehat{S} (\omega,\cdot)$. 
Under the appropriate measurability assumptions, 
the worst case error and the worst case information cost of a 
randomized algorithm $\cS$ are defined analogously to \eqref{g21} 
and \eqref{g22}.
The class of all randomized algorithms that satisfy the
measurability assumptions is denoted by $\Sr$, and
\[
\cra_\cc (\eps) = \inf \{ \cost_\cc(\cS) \colon
\cS \in \Sr,\ \ee(\cS) \leq \eps\}
\]
is the $\eps$-complexity for the approximation of $S$ by
means of randomized algorithms. 
We refer to, e.g., \citet[Sec.~2.3]{CDMGR09} for further details.
Obviously,
\begin{align}\label{g77}
\cra_\cc(\eps) \leq \cb_\cc(\eps) \leq \cd_\cc(\eps)
\end{align}
for every $\eps >0$.

\begin{rem}
Every algorithm $\cS \in \Sb$ is asking for 
exactly one new random bit prior to a new function evaluation.
In a refined model, this number of random bits could vary; 
observe that the latter is actually 
the case for the random bit multilevel algorithms that have been
studied in Sections~\ref{section1} and \ref{secbah}.
On the other hand, 
cost one should be charged
explicitly for every call
of the random bit generator in a refined model.
Formally an algorithm in this refined model
may be turned into an algorithm $\cS \in \Sb$ by additional calls of the 
random bit generator and additional evaluations of $f$ 
at a fixed element $y \in \fX$
with no impact on the further computation.
Observe that the expected cost for applying the original algorithm in the 
refined model to $f \in F$ is bounded from above by 
$\cc(y) \cdot \cost_\cc(\cS,f)$.

We stress that even this refined model disregards further details
of the real-number model, cf.\ Section~\ref{seccomp}. However,
for the purpose of establishing lower bounds with unspecified 
multiplicative constants, it suffices to study algorithms
$\cS \in \Sb$ and $\cost_\cc(\cS,f)$, i.e., the information cost,
in the non-trivial case that there exists an element $y \in \fX$
with $c(y) < \infty$.

The refined model is formally introduced in \citet{H18}. In this model
upper and lower bounds that separately take into account the 
number of random bits
and the information cost may be established in a natural
way. Furthermore, this framework allows to
consider
Monte Carlo algorithms
that only have access to generators for an arbitrary, but fixed
set $\mathcal{Q}$ of probability distributions.
Random bit Monte Carlo algorithms, as studied in the present
paper, correspond to the case $\mathcal{Q} = \{Q\}$ with $Q$ denoting 
the uniform distribution on $\{0,1\}$, while 
$\mathcal{Q} = \{Q\}$ with $Q$ denoting 
the uniform distribution on $[0,1]$ is most often considered
for quadrature problems.
\end{rem}

\subsection{A General Lower Bound}\label{ls2}

Let $\cS = ((\psi_\ell)_\ell,(\tau_\ell)_\ell,(\phi_\ell)_\ell) \in
\Sb$, and suppose that there exist $k,c \in \N$ with the following
properties:
For all $\omega \in \Omega$, $f \in F$, and $y_1,\dots,y_{k-1} \in
\R$ we have $n(\omega,f) = k$ and
\[
\sum_{\ell=1}^{k}
\cc(\psi_\ell(\omega_1,\dots,\omega_\ell,y_1,\dots,y_{\ell-1})) = c,
\]
i.e., $k$ random bits are used in any case and the information cost
is deterministic and does not depend on $f$.
Then $\cS$ can be emulated by $2^k$ deterministic algorithms, each
with information cost $c$ for every $f$,
plus a random choice among the outputs of the deterministic
algorithms. This observation can be generalized to any $\cS \in
\Sb$, and it leads to a lower bound for $\cb_\cc (\eps)$ in terms 
of $\cd_\cc (\eps)$.
Under the particular assumptions mentioned above
this lower bound has already been established in
\citet[Prop.~1]{MR2076605}.

\begin{lem}\label{l22}
For every algorithm $\cS \in \Sb$ with $\cost_\cc(\cS)<\infty$ 
there exists an algorithm $\cS^* \in \Sd$ such that
\[
\cost_\cc(\cS^*)
\leq 2 \cdot (\cost_\cc (\cS)+1) \cdot 4^{\cost_\cc (\cS)+1}
\]
and
\[
\ee(\cS^*) \leq 2 \cdot \ee(\cS).
\]
\end{lem}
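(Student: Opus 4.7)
The plan is to derandomize $\cS$ by enumerating all sufficiently short random-bit prefixes, simulating the deterministic branches of $\cS$ that they induce, and then averaging over those branches that terminate within a prescribed cost budget. The two analytic tools are Markov's inequality, to guarantee that most bit prefixes of a given fixed length produce cheap terminating runs, and the conditional Jensen inequality, to transfer the mean-square error bound on $\cS$ into a deterministic error bound on the average.

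Set $c=\cost_\cc(\cS)$ and let $k=\lfloor 2(c+1)\rfloor\in\N$. Since $\cc\geq 1$, the pathwise number of random bits $n(\omega,f)$ used by $\cS$ is always dominated by the pathwise information cost, hence $\E(n(\cdot,f))\leq\cost_\cc(\cS,f)\leq c$. Let $G_f$ denote the event that the pathwise information cost incurred when $\cS$ is applied to $f$ is at most $k$; on $G_f$ the total number of bits used is also at most $k$, so $\tS(\cdot,f)$ reduces to a function of $(\omega_1,\dots,\omega_k)$ only. Markov's inequality then yields $P(G_f^c)\leq c/k<1/2$, uniformly in $f\in F$. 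I would next define the deterministic algorithm $\cS^*\in\Sd$ as follows: enumerate all $\omega\in\{0,1\}^k$ in a fixed order, run the $\omega$-induced simulation of $\cS$ while aborting any branch whose accumulated information cost would exceed $k$, and return the arithmetic mean of the outputs of the set $\mathcal{G}_f\subseteq\{0,1\}^k$ of successful branches, which equals $\E(\tS(\cdot,f)\mid G_f)$. Non-emptiness of $\mathcal{G}_f$ follows from $P(G_f)>0$. Embedding this enumeration-with-abort into the formal $(\psi_\ell,\tau_\ell,\phi_\ell)$-format of $\Sd$ from Section~\ref{ls1} is routine bookkeeping that I would only sketch.

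For the cost bound, each of the $2^k$ simulated branches uses at most $k$ information cost, so
\[
\cost_\cc(\cS^*)\leq 2^k\cdot k\leq 2^{2(c+1)}\cdot 2(c+1)=2\cdot(c+1)\cdot 4^{c+1},
\]
using $k\leq 2(c+1)$. For the error bound, I apply the conditional Jensen inequality to $X=S(f)-\tS(\cdot,f)$:
\[
|S(f)-\cS^*(f)|^2
=\bigl|\E(X\mid G_f)\bigr|^2
\leq\E(X^2\mid G_f)
\leq\frac{\E(X^2)}{P(G_f)}
<2\,\ee(\cS,f)^2,
\]
which gives $\ee(\cS^*)\leq\sqrt{2}\,\ee(\cS)\leq 2\,\ee(\cS)$. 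The main subtle point, and the reason for the specific averaging recipe, is the choice of default behaviour on bit strings for which the simulation runs out of budget: replacing their outputs by, say, $0$ would introduce an uncontrollable $|S(f)|^2\cdot P(G_f^c)$ contribution, since no a priori $L^\infty$-bound on $S$ is available, whereas averaging only over successful branches and then invoking conditional Jensen bypasses this issue entirely. The remaining work is to verify that the single integer $k=\lfloor 2(c+1)\rfloor$ makes both $2^k\cdot k\leq 2(c+1)\cdot 4^{c+1}$ and $c/k<1/2$ hold for every $c\geq 0$, which is elementary.
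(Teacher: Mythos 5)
Your proposal is correct and follows essentially the same route as the paper: Markov's inequality to find, for each $f$, a cost-budgeted event of probability greater than $1/2$ determined by a bounded number of bits, emulation of $\cS$ by enumerating all bit strings of that length with an abort rule, and output equal to the conditional expectation of $\tS(\cdot,f)$ on that event, yielding the same cost bound $2(c+1)\cdot 4^{c+1}$. The only cosmetic differences are the parametrization of the budget ($k=\lfloor 2(c+1)\rfloor$ versus the paper's $2\lceil c\rceil$) and the error estimate via conditional Jensen in $L^2$ (giving factor $\sqrt{2}$) instead of the paper's conditional $L^1$ bound (giving factor $2$).
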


\begin{proof}
Let $\cS = ((\psi_\ell)_\ell,(\tau_\ell)_\ell,(\phi_\ell)_\ell) \in
\Sb$ with input-output mapping $\tS$. We may assume that 
$\ee(\cS)<\infty$, in addition to $\cost_\cc(\cS) < \infty$.
Hence $\tS(\cdot,f)$ is integrable for every $f\in F$.
Moreover, let $n (\cdot,\cdot)$ be the corresponding total number of
evaluations. Put
\begin{align*}
k=\lceil\cost_\cc(\cS)\rceil\in\N.
\end{align*}
For $f\in F$ we define
\begin{align*}
A_f=\Big\{\omega\in\Omega\colon
\sum_{\ell=1}^{n(\omega,f)}
\cc(\psi_\ell(\pi_\ell(\omega),y_1,\dots,y_{\ell-1}))
\leq 2k
\Big\}
\end{align*}
and
\begin{align*}
\tS^*(f)
=
\E(\tS(\cdot,f)\,\vert\, A_f).
\end{align*}
Since $P(A_f)\geq 1/2$, we get the error bound
\begin{align*}
|S(f)-\tS^*(f)|
=|S(f)-\E(\tS(\cdot,f)\,\vert\, A_f)|
\leq \E\big(|S(f)-\tS(\cdot,f)| \,\big\vert\, A_f\big)
\leq 2 \cdot \ee (\cS,f)
\end{align*}
for the conditional expectation $\tS^*(f)$.

It remains to show that $\tS^*$ is the input-output mapping
of an algorithm $\cS^* \in \Sd$ with a cost bound as claimed.
For $\ell\in \N$ let $\widetilde{\pi}_\ell\colon \{0,1\}^\ell\to\Omega$ 
be given by
\begin{align*}
\widetilde{\pi}_\ell(\omega)
=(\omega,0,0,\dots).
\end{align*}
Since $\cc \geq 1$, we have $n(\omega,f) \leq 2k$ for $\omega \in
A_f$. Hereby it follows that 
\begin{align*}
1_{A_f}
=1_{A_f} \circ \widetilde{\pi}_{2k}\circ \pi_{2k}
\end{align*}
and
\begin{align*}
\tS(\cdot,f) \cdot 1_{A_f} =
\tS(\cdot,f) 
\circ \widetilde{\pi}_{2k}\circ \pi_{2k}
\cdot 1_{A_f} =
\bigl(\tS(\cdot,f) \cdot 1_{A_f} \bigr)
\circ \widetilde{\pi}_{2k}\circ \pi_{2k}.
\end{align*}
Furthermore, $\pi_{2k}$ is uniformly distributed on $\{0,1\}^{2k}$.
Hence we get
\begin{align*}
\tS^*(f)
=\frac{1}{|B_f|} \cdot
\sum_{\omega\in \{0,1\}^{2k}}
\tS(\widetilde{\pi}_{2k}(\omega),f)\cdot 1_{B_f}(\omega),
\end{align*}
where
\begin{align*}
B_f
=
\{\omega\in \{0,1\}^{2k}\colon \widetilde{\pi}_{2k}(\omega)\in A_f\}.
\end{align*}
For every $\omega\in \{0,1\}^{2k}$ and every $f \in F$ the
information cost of 
simultaneously computing
$1_{B_f}(\omega)$
and
$\tS(\widetilde{\pi}_{2k}(\omega),f)\cdot 1_{B_f}(\omega)$
is bounded by $2k$. We conclude that
there exists an algorithm $\cS^* \in \Sd$
with input-output mapping $\tS^*$
and $\cost_\cc(\cS^*) \leq 2k \cdot 4^{k}$.
\end{proof}

\begin{thm}\label{t22}
There exist universal constants $c_1,c_2 > 0$ such that
\[
c_1 \cdot \ln \left(\cd_\cc (2 \eps)\right) - c_2 \leq \cb_\cc (\eps)
\]
for every $\eps>0$ with $\cd_\cc (2 \eps)<\infty$.
Furthermore, $\cb_\cc (\eps)=\infty$ if
$\cd_\cc (2 \eps)=\infty$.
\end{thm}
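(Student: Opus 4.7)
The plan is to deduce Theorem~\ref{t22} directly from Lemma~\ref{l22} by a simple emulation/contradiction argument, so essentially all of the work has already been done.

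First I would handle the second assertion by contraposition. Suppose $\cb_\cc(\eps) < \infty$. Then there exists some $\cS \in \Sb$ with $\ee(\cS) \leq \eps$ and $\cost_\cc(\cS) < \infty$. Apply Lemma~\ref{l22} to obtain a deterministic algorithm $\cS^* \in \Sd$ with finite worst case information cost and $\ee(\cS^*) \leq 2\ee(\cS) \leq 2\eps$. This $\cS^*$ is admissible for the infimum defining $\cd_\cc(2\eps)$, so $\cd_\cc(2\eps) < \infty$.

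For the main inequality, fix $\eps > 0$ with $\cd_\cc(2\eps) < \infty$. If $\cb_\cc(\eps) = \infty$ there is nothing to prove, so assume it is finite. For any $\cS \in \Sb$ with $\ee(\cS) \leq \eps$ and $\cost_\cc(\cS) < \infty$, Lemma~\ref{l22} produces a deterministic algorithm $\cS^*$ with $\ee(\cS^*) \leq 2\eps$, hence
\[
\cd_\cc(2\eps) \;\leq\; \cost_\cc(\cS^*) \;\leq\; 2 \cdot (\cost_\cc(\cS)+1) \cdot 4^{\cost_\cc(\cS)+1}.
\]
Taking logarithms and solving for $\cost_\cc(\cS)$ gives an inequality of the form
\[
\cost_\cc(\cS) \;\geq\; c_1 \cdot \ln(\cd_\cc(2\eps)) - c_2
\]
with universal constants $c_1, c_2 > 0$; here one absorbs the slowly growing factor $\ln(2(\cost_\cc(\cS)+1))$ into the dominant linear term by a standard elementary estimate (for instance, bounding $\ln(2(t+1)) \leq t \cdot \ln 4$ for all $t \geq 1$, and handling the bounded regime $t < 1$ by adjusting $c_2$). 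Taking the infimum over all admissible $\cS \in \Sb$ yields the claimed lower bound on $\cb_\cc(\eps)$.

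No step is a genuine obstacle: the meat of the argument is Lemma~\ref{l22}, and what remains is just rearranging its cost bound into a logarithm and tidying up constants. The only thing that needs a little care is making sure the universal constants $c_1,c_2$ do not depend on $\eps$ or on the problem $(F,S,\cc)$, which is immediate because both the constants $2$ and $4$ appearing in Lemma~\ref{l22} are universal.
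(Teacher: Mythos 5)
Your proposal is correct and follows essentially the same route as the paper: both deduce the theorem directly from Lemma~\ref{l22} by bounding $\cd_\cc(2\eps)$ by the cost of the emulating deterministic algorithm and then inverting the bound $2z\cdot 4^{z}$ (the paper via $2z\cdot 4^{z}\leq 2\exp((1+\ln 4)z)$, you via an equivalent elementary estimate), with the infinite case handled by the same contrapositive observation.
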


\begin{proof}
Let $\cS \in \Sb$ with $\ee(\cS) \leq \eps$ and $\cost_\cc(\cS) <
\infty$. Choose $\cS^* \in \Sd$ according to Lemma~\ref{l22}, and
put $z = \cost_\cc(\cS) + 1$. We obtain
\[
\cd_\cc(2 \eps) \leq \cost_\cc(\cS^*) \leq
2 z \cdot 4^z \leq 2 \cdot \exp( (1+ \ln (4)) \cdot z),
\]
which yields the lower bound for $\cost_\cc(\cS)$ as claimed.
\end{proof}

In many cases the lower bound from Theorem~\ref{t22} is far from sharp, see, e.g., \citet{MR2076605} for finite-dimensional quadrature problems.
The lower bound is useful, however, for the one-dimensional case studied in Example~\ref{ex1}, where we do not have any regularity,
as well as for the quadrature problem for SDEs.

\begin{rem}
Suppose that $\fX$ is a Banach space, $F=\Lip_1$, and
$S$ is defined by integration with respect to a Borel probability measure
$\mu$ on $\fX$. It is well known that $\cd_\cc$ is determined by the 
quantization numbers of $\mu$ of order one, if 
$\cc=1$, see, e.g., \citet{CDMGR09}. This relation is exploited in 
the proof of Theorem~\ref{t23} below.
\end{rem}

We present a computational problem that is trivial when
random numbers may be used, but unsolvable if only random
bits are available.

\begin{exmp}\label{ex1}
Let $\fX = [0,1]$, let $F$ denote the class of all functions
$f \colon [0,1] \to \R$ that are constant $\lambda$-a.e., and let
\[
S(f) = \int_{[0,1]} f\,\mathrm d \lambda
\]
for $f \in F$. Consider $\cc=1$, which is most natural in the
present setting, and let $\eps > 0$.
Obviously $\cd_\cc(\eps) = \infty$, so that Theorem \ref{t22}
yields
\[
\cb_\cc(\eps) = \infty.
\]
On the other hand, $S(f) = f(\omega)$ for $\lambda$-a.e. $\omega
\in [0,1]$, and therefore
\[
\cra_\cc(\eps) = 1. 
\]

The same phenomenon is present for
Monte Carlo algorithms that only have access to any fixed set $\mathcal{Q}$ 
of probability distributions: For any such $\mathcal{Q}$ there
exists a computational problem that is trivial for the class $\Sr$,
but is unsolvable by means of
Monte Carlo
algorithms with generators only for the distributions from
$\mathcal{Q}$.
See \citet[Thm.~3.1]{H18}.
\end{exmp}

\subsection{A Lower Bound for Quadrature of SDEs}\label{s53}

We return to the quadrature problem for SDEs, see
Section~\ref{seccomp}. Hence we have
\[
\fX = C([0,1],\R^r)
\]
and 
\[
F = \Lip_1,
\]
and $S$ is given by \eqref{g20}. The function $\cc$ is given as
follows. 
For $\ell \in \N_0$ let $\fX_\ell$ denote the set of all
piecewise linear functions $x \in \fX$ with 
breakpoints $k/2^\ell$ for $k=0,\dots,2^\ell$, so that
the spaces $\fX_\ell$ form an increasing sequence of
finite-dimensional subspaces of $\fX$, whose union is dense in
$\fX$. Obviously, $\dim(\fX_\ell) = 2^\ell+1$, and $\cc$ is defined
by
\begin{equation}\label{g23}
\cc(x) = \inf \{ \dim(\fX_\ell) \colon x \in \fX_\ell\}
\end{equation}
for $x \in \fX$.
Compared to Sections~\ref{seccomp}--\ref{sec5}, we impose slightly 
stronger smoothness assumption as well as a non-degeneracy assumption
on the diffusion coefficient $b$ of the SDE, which in particular 
exclude pathological cases yielding a deterministic solution of the SDE.

\begin{thm}\label{t23}
Assume that $r=d$ and that
\begin{itemize}
\item
$b:\R^r\to \R^{r\times r}$ has bounded first and second
order partial derivatives and is of class $C^\infty$ in some
neighborhood of $x_0$,
\item
$\det b(x_0)\not=0$,
\end{itemize}
in addition to the Lipschitz continuity of $a:\R^r\to \R^r$.
Then there exist constants $c,\eps_0>0$ such that 
\[
\cb_\cc(\eps) \geq c \cdot \eps^{-2} 
\]
for every $\eps \in {]0,\eps_0]}$. 
\end{thm}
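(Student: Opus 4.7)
The plan is to combine the reduction from random bit Monte Carlo to deterministic algorithms furnished by Theorem~\ref{t22} with a Wasserstein-type quantization lower bound for the law $\mu$ of $X$ on $\fX = C([0,1],\R^r)$, so that the mere existence of a random bit algorithm with error at most $\eps$ forces the deterministic complexity at tolerance $2\eps$ to grow doubly exponentially in $\eps^{-1}$. Concretely, by Theorem~\ref{t22} it suffices to exhibit constants $\kappa,\eps_1>0$ with $\cd_\cc(2\eps)\geq \exp(\kappa\cdot\eps^{-2})$ for every $\eps\in{]0,\eps_1]}$, since then $\cb_\cc(\eps)\geq c_1\cdot \kappa\cdot\eps^{-2}-c_2$, which exceeds $c\cdot\eps^{-2}$ for $\eps$ small enough.

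To bound $\cd_\cc$ from below I will identify it with a quantization number. Since $\Lip_1$ is convex and symmetric, adaption does not help in the deterministic worst case, so an algorithm $\cS\in\Sd$ with $\cost_\cc(\cS)\leq N$ may be assumed to evaluate $f$ at a fixed set $\{x_1,\dots,x_n\}\subseteq \bigcup_{\ell\in\N_0}\fX_\ell$ with $\sum_{i=1}^n \cc(x_i)\leq N$, and because $\cc(x)\geq 2$ on this union we have $n\leq N/2$. By Kantorovich--Rubinstein duality the worst case error of any linear rule supported on these points equals the Wasserstein-1 distance from $\mu$ to the set of probability measures supported on $\{x_1,\dots,x_n\}$, so
\[
\ee(\cS)
\geq
e_{\lfloor N/2\rfloor}(\mu)
=
\inf\bigl\{W_1(\mu,\nu) : \nu \text{ probability measure with } |\operatorname{supp}(\nu)|\leq \lfloor N/2\rfloor\bigr\}.
\]
Hence $\cd_\cc(2\eps)\geq 2\cdot \inf\{n\in\N : e_n(\mu)\leq 2\eps\}$, and the task is reduced to a lower bound on the $W_1$ quantization numbers of $\mu$.

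The hard part will be establishing $e_n(\mu)\geq c_3\cdot (\log n)^{-1/2}$ for all sufficiently large $n$ under the present assumptions on $a$ and $b$. In the scalar case $r=d=1$ a matching lower bound for the Wasserstein distance of order two on $L_2[0,1]$ has been proved in \citet[Thm.~4]{GHMR17}, but here I need it for $W_1$ on $C([0,1],\R^r)$; the trivial inequality $W_1\leq W_2$ makes the implication go the wrong way, so I will establish the bound directly. The strategy is to exploit the smoothness of $b$ together with the non-degeneracy $\det b(x_0)\neq 0$ to couple $X$ on a short initial interval with the $r$-dimensional Brownian motion $b(x_0)\,W$ via It\^o's formula and a Girsanov-type change of measure; this reduces the problem to the quantization of a non-degenerate Gaussian measure on $C([0,\delta],\R^r)$, for which classical Gaussian small-ball/entropy estimates provide the $(\log n)^{-1/2}$ rate in the supremum norm. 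Combining the three steps, $e_{\lfloor N/2\rfloor}(\mu)\leq 2\eps$ forces $N\geq 2\exp(c_3^2/(16\eps^2))$, which furnishes the required doubly exponential lower bound on $\cd_\cc(2\eps)$ and hence, via Theorem~\ref{t22}, the claim.
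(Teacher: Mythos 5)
Your overall reduction is exactly the paper's: invoke Theorem~\ref{t22} and feed it a superexponential lower bound on the deterministic complexity. The paper's entire proof is two lines: by \citet[Thm.~1, Prop.~3]{CDMGR09} one has $\cd_1(\eps)\geq\exp(c\cdot\eps^{-2})$ for small $\eps$ (even in the most generous cost model $\cc=1$, hence a fortiori for the $\cc$ of \eqref{g23}), and then Theorem~\ref{t22} gives $\cb_\cc(\eps)\geq c_1\ln(\cd_\cc(2\eps))-c_2\geq c'\cdot\eps^{-2}$. Your first paragraph reproduces this, and your passage from deterministic quadrature on $\Lip_1$ to the quantization numbers of the law $\mu$ of $X$ is also the mechanism behind the cited result (the paper says as much in the remark preceding Example~\ref{ex1}); your appeal to ``linear rules'' and Kantorovich--Rubinstein duality is looser than necessary — the clean argument is the standard fooling argument with $f_\pm=\pm\operatorname{dist}(\cdot,\{x_1,\dots,x_n\})$ evaluated along the computation path of the zero functional, which handles adaption and nonlinear outputs directly and yields $\ee(\cS)\geq\E\operatorname{dist}(X,\{x_1,\dots,x_n\})$ — but this is repairable.

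The genuine gap is the step you yourself flag as ``the hard part'': the lower bound $e_n(\mu)\geq c_3\cdot(\log n)^{-1/2}$ for $W_1$-quantization of the diffusion law in supremum norm. This is precisely the content of \citet[Prop.~3]{CDMGR09} (and of the related results you cite from \citet{GHMR17} for $L_2$), and your proposed route does not go through as stated: a Girsanov-type change of measure only alters the drift, so it cannot transform the state-dependent diffusion coefficient $b(X(t))$ into the constant $b(x_0)$; the laws of $X$ and of $x_0+b(x_0)W$ are in general mutually singular, so no absolute-continuity argument reduces the problem to quantization of a Gaussian measure on $C([0,\delta],\R^r)$. Handling the non-constant diffusion coefficient (via conditional small-ball estimates, localization near $x_0$ using the smoothness and non-degeneracy hypotheses, or, in dimension one, Lamperti-type transforms) is exactly the substantial technical work of the cited results, and your sketch neither carries it out nor gives a correct mechanism for it. So either you cite \citet[Thm.~1, Prop.~3]{CDMGR09} at this point — in which case your proof collapses to the paper's — or you must supply a genuinely new proof of the quantization lower bound, which the present argument does not.
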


\begin{proof}
According to \citet[Thm.~1, Prop.~3]{CDMGR09} there exist
constants $c,\eps_0>0$ such that $\cd_1(\eps) \geq \exp(c \cdot
\eps^{-2})$ for every $\eps \in {]0,\eps_0]}$. 
Apply Theorem~\ref{t22}.
\end{proof}

We combine Theorems~\ref{theo3} and \ref{t23}. 

\begin{cor}\label{c1}
Under the assumptions from Theorem \ref{t23}
the following holds true:
There exist constants $c_1,c_2,\eps_0 >0$ such
that 
\[
c_1 \cdot \eps^{-2} \leq
\cb_\cc(\eps) \leq c_2 \cdot \eps^{-2} \cdot
\bigl(\ln(\eps^{-1})\bigr)^3
\]
for every $\eps \in {]0,\eps_0]}$, and 
the random bit multilevel Euler algorithm $A_\eps^\stwo$ that
achieves the upper bound is almost optimal.
\end{cor}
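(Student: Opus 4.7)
The plan is to obtain the corollary as an essentially immediate combination of Theorem~\ref{t23} for the lower bound and Theorem~\ref{theo3} for the upper bound, with only a small amount of bookkeeping to pass between the two cost models in use.

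For the lower bound, I would simply quote Theorem~\ref{t23}: it provides constants $c, \eps_0 > 0$ with $\cb_\cc(\eps) \geq c \cdot \eps^{-2}$ for every $\eps \in \, ]0, \eps_0]$. Setting $c_1 = c$ handles the left inequality directly and requires no further work.

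For the upper bound I would argue as follows. First, the information cost $\cost_\cc$ defined via $\cc$ in \eqref{g23} is bounded from above by the full cost $\cost(\cdot,\Lip_1)$ introduced in Section~\ref{seccomp}, since the former counts only the oracle calls to $f$, while the latter additionally charges for arithmetic operations, random-bit calls, and evaluations of $a,b$. Because every evaluation of $f$ performed by $A_\eps^\stwo$ is at a piecewise linear function with breakpoints $k/2^\ell$, these calls cost exactly $\cc(\cdot) = 2^\ell + 1$, so the inequality $\cost_\cc(A_\eps^\stwo) \leq \cost(A_\eps^\stwo, \Lip_1)$ is clear. Next, let $c$ denote the constant from Theorem~\ref{theo3}, shrink $\eps_0$ if necessary so that $\eps/c < 1/2$ for $\eps \in \, ]0, \eps_0]$, and set $\eps' = \eps/c$. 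Applying Theorem~\ref{theo3} to $A_{\eps'}^\stwo$ yields
\[
\ee\bigl(A_{\eps'}^\stwo, \Lip_1\bigr) \leq c \cdot \eps' = \eps
\quad\text{and}\quad
\cost_\cc\bigl(A_{\eps'}^\stwo\bigr) \leq c \cdot (\eps')^{-2} \bigl(\ln((\eps')^{-1})\bigr)^3.
\]
Since $\ln((\eps')^{-1}) = \ln(c) + \ln(\eps^{-1}) \leq c'' \cdot \ln(\eps^{-1})$ for $\eps$ sufficiently small, the right hand side is bounded by $c_2 \cdot \eps^{-2} (\ln(\eps^{-1}))^3$ for a suitable constant $c_2$. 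As $A_{\eps'}^\stwo \in \Sb$, taking the infimum in the definition of $\cb_\cc(\eps)$ produces the claimed upper bound.

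The "almost optimal" assertion is then a direct consequence: the lower and upper bounds agree in the exponent of $\eps^{-1}$ and differ only by the factor $(\ln(\eps^{-1}))^3$, so $A_\eps^\stwo$ achieves $\cb_\cc(\eps)$ up to this polylogarithmic gap. I do not foresee any genuine obstacle in this proof; the only detail requiring care is the rescaling argument and the observation that $\cost_\cc \leq \cost(\cdot,\Lip_1)$ for $A_\eps^\stwo$, both of which are routine.
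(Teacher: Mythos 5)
Your proposal is correct and takes essentially the same route as the paper, which states Corollary~\ref{c1} simply as the combination of the lower bound of Theorem~\ref{t23} with the upper bound of Theorem~\ref{theo3}, together with exactly the kind of rescaling and cost-model bookkeeping you spell out. The only point you gloss over is that $A_{\eps'}^\stwo$ is not literally an element of $\Sb$ as formally defined in Section~\ref{ls1} (that class demands exactly one random bit per $f$-evaluation, whereas the multilevel algorithm uses many bits per path); the remark at the end of Section~\ref{ls1} supplies the routine conversion via additional dummy evaluations of $f$ at a fixed $y\in\fX$, which changes $\cost_\cc$ only by a constant factor and hence does not affect the claimed bound.
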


We stress that the lower bounds from Theorem \ref{t23}
and from Corollary \ref{c1}
hold even for $\cc=1$, i.e., in
the most generous cost model, where every $f \in \Lip_1$ 
may be evaluated at any point $x \in C([0,1],\R^r)$ at unit cost.
The key to prove the lower bound
is the number of random bits together with the number of 
function evaluations being used by any algorithm $\cS \in \Sb$.

Due to \citet[Thm.~11]{CDMGR09} we have the following lower bound 
for the class $\Sr$ 
under the assumptions from Theorem \ref{t23}:
There exist constants $c_1,\eps_0 >0$ such that 
\[
\cra_\cc(\eps) \geq c_1 \cdot \eps^{-2}
\]
for every $\eps \in {]0,\eps_0]}$.
Obviously the latter estimate together with \eqref{g77} yields a second proof of Theorem~\ref{t23}.
We stress that this lower bound is no longer valid for $\cc=1$,
i.e., it does not suffice to only consider the number of
function evaluations being used by any algorithm $\cS \in \Sr$.
The lower bound is valid, however, for every $\cc$ according to
\eqref{g23} based on any increasing sequence of finite-dimensional
subspaces $\fX_\ell \subset \fX$.

While the asymptotic behavior of $\cb_\cc(\eps)$ and
$\cra_\cc(\eps)$ for $\eps \to 0$ is not known exactly, 
their ratio is bounded by a multiple of
$(\ln(\eps^{-1}))^3$. 
In particular, we do not know
whether random bits are as powerful as random numbers
for the quadrature problem under investigation,
but random bits are at least almost as powerful as random numbers.
Note that $\cb_\cc(\eps)$ is dramatically smaller than $\cd_1(\eps)$.

\appendix

\section{Bakhvalov's Trick}\label{app}

\begin{lem}\label{bahvalov}
Let $q\in\N$ and $n\in\N$.
Consider an independent family $(G_j)_{j=1,\dots,2n}$ of random
variables that are uniformly distributed on $D^{(q)}$.
Then the family 
$(G_{j_1}+ G_{j_2+n}+2^{-(q+1)} \mod 1)_{j_1,j_2=1,\dots,n}$
is pairwise independent with each random variable being uniformly 
distributed on $D^{(q)}$.
\end{lem}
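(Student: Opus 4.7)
My plan is to reduce the statement to one about the cyclic group $\Z/2^q\Z$ and then verify pairwise independence by a short case analysis on how the index pairs overlap. First, I would introduce the bijection $\iota\colon D^{(q)}\to\Z/2^q\Z$ sending $k/2^q + 2^{-(q+1)}$ to $k$. A direct computation with
\[
\bigl(k_1/2^q + 2^{-(q+1)}\bigr) + \bigl(k_2/2^q + 2^{-(q+1)}\bigr) + 2^{-(q+1)}
=
(k_1+k_2+1)/2^q + 2^{-(q+1)}
\]
shows that for $x,y\in D^{(q)}$ the element $x + y + 2^{-(q+1)}\mod 1$ lies again in $D^{(q)}$ and satisfies
\[
\iota\bigl(x + y + 2^{-(q+1)} \mod 1\bigr) = \iota(x) + \iota(y) + 1 \mod 2^q.
\]
Setting $H_j = \iota(G_j)$ yields an independent family of uniform random variables on $\Z/2^q\Z$, and it suffices to prove that the family
\[
Z_{j_1,j_2} = H_{j_1} + H_{j_2+n} + 1 \mod 2^q,\qquad j_1,j_2 = 1,\dots,n,
\]
is pairwise independent with uniform marginals.

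Uniformity of each $Z_{j_1,j_2}$ is immediate, since $H_{j_1}$ is uniform on $\Z/2^q\Z$ and independent of $H_{j_2+n}$, and adding an independent uniform variable on a finite abelian group to any random variable yields a uniform result.

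For pairwise independence, I would fix two distinct pairs $(j_1,j_2)\neq(j_1',j_2')$ and distinguish three cases. If $j_1\neq j_1'$ and $j_2\neq j_2'$, then the four indices $j_1,j_1',j_2+n,j_2'+n$ are pairwise distinct, so $Z_{j_1,j_2}$ and $Z_{j_1',j_2'}$ are functions of disjoint independent random variables and thus independent. If $j_1 = j_1'$ and $j_2\neq j_2'$, I would condition on the shared variable $H_{j_1}=c$: the two random variables then become shifts by $c+1$ of the independent uniform $H_{j_2+n}$ and $H_{j_2'+n}$, hence conditionally independent and uniform, giving
\[
P(Z_{j_1,j_2}=a,\, Z_{j_1',j_2'}=b)
=
\sum_c P(H_{j_1}=c)\cdot 2^{-q}\cdot 2^{-q}
=
2^{-2q}
\]
for all $a,b\in \Z/2^q\Z$, which matches the product of the marginals. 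The symmetric case $j_2=j_2'$, $j_1\neq j_1'$ is handled identically.

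The argument is essentially routine once the reduction to $\Z/2^q\Z$ is in place; the only point that requires attention is the correct identification of the group operation induced by $(x,y)\mapsto x+y+2^{-(q+1)}\mod 1$, which is exactly what the shift $2^{-(q+1)}$ in the definition of $D^{(q)}$ is designed to accommodate so that the sum lands back in $D^{(q)}$.
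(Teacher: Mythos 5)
Your proof is correct and follows essentially the same route as the paper's: a case split on how the index pairs overlap, conditioning on the shared variable, and translation invariance of the uniform distribution. Your explicit identification of $(x,y)\mapsto x+y+2^{-(q+1)}\bmod 1$ with addition (plus a constant) on $\Z/2^q\Z$ is just a clean repackaging of the paper's observations that $G+2^{-(q+1)}\in 2^{-q}\cdot\N$ and that adding such a shift modulo $1$ preserves the uniform distribution on $D^{(q)}$.
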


\begin{proof}
Let $G$ be uniformly distributed on $D^{(q)}$.
Inductively, we get 
\begin{align}\label{appeq1}
\forall\, z\in 2^{-q}\cdot \N
\colon
z+G\mod 1
\text{ is uniformly distributed on }D^{(q)}.
\end{align} 
Furthermore,
\begin{align}\label{appeq2}
G+2^{-(q+1)}\in 2^{-q}\cdot \N.
\end{align}

Let $j_1,j_2\in\{1,\dots,n\}$.
Conditioned on $G_{j_1}$
the random variable $G_{j_2+n}$
is uniformly distributed on $D^{(q)}$.
Combining this with
\eqref{appeq1}
and
\eqref{appeq2}
shows that   
conditioned on $G_{j_1}$
the random variable $G_{j_1}+G_{j_2+n}+2^{-(q+1)}\mod 1$
is uniformly distributed on $D^{(q)}$.
Hence the random variable $G_{j_1}+G_{j_2+n}+2^{-(q+1)}\mod 1$
is uniformly distributed on $D^{(q)}$.

Let $i_1,i_2\in\{1,\dots,n\}$ such that $(i_1,i_2)\neq (j_1,j_2)$.
Next we show that $G_{j_1}+ G_{j_2+n}+2^{-(q+1)} \mod 1$
and $G_{i_1}+ G_{i_2+n}+2^{-(q+1)} \mod 1$
are independent.
The case $i_1\neq j_1$ and $i_2\neq j_2$ is trivial.
Thus without loss of generality we may assume $i_1=j_1$ and $i_2\neq j_2$.
Conditioned on $G_{j_1}$ the random variables $G_{i_2+n}$
and $G_{j_2+n}$ are independent and each uniformly distributed 
on $D^{(q)}$. Combining this with
\eqref{appeq1}
and
\eqref{appeq2}
shows that   
conditioned on $G_{j_1}$
the random variables
$G_{j_1}+G_{j_2+n}+2^{-(q+1)}\mod 1$
and
$G_{i_1}+G_{i_2+n}+2^{-(q+1)}\mod 1$
are independent and each uniformly distributed on $D^{(q)}$.
Hence we get the independence of the random variables
$G_{j_1}+G_{j_2+n}+2^{-(q+1)}\mod 1$
and
$G_{i_1}+G_{i_2+n}+2^{-(q+1)}\mod 1$.
\end{proof}

\section{An Estimate used in Remark~\ref{cr2}}\label{apprem10}

Observe that 
\[
\sum_{\ell=1}^L 2^\ell \cdot (L-\ell) = 2^{L+1}-2(L+1) \leq 2^{L+1}.
\]
Hence there exist $\eps_0\in {]0,1/2[}$ and 
$c_1,c_2,c_3>0$ such that for every $\eps\in {]0,\eps_0[}$ we have
\begin{align*}
\sum_{\ell=0}^{L(\eps)}2^\ell\cdot 
2\hat n_\ell(\eps)
&\leq
c_1 \cdot \sum_{\ell=1}^{L(\eps)}2^\ell\cdot
\log_2(L(\eps)\cdot 2^{-\ell}\cdot \ell \cdot \eps^{-2})
\\&\leq
c_1\cdot 
\sum_{\ell=1}^{L(\eps)}2^\ell \cdot
\left( L(\eps)-\ell+\log_2(L(\eps)^2)+\log_2(\eps^{-2})-L(\eps)\right)
\\&\leq
c_1\cdot 
\sum_{\ell=1}^{L(\eps)}2^\ell \cdot
\left( L(\eps)-\ell+c_2\cdot\ln(\ln(\eps^{-1}))\right)
\\&\leq
c_1\cdot 
\left( 2^{L(\eps)+1}+c_2\cdot \ln(\ln(\eps^{-1}))\cdot 2^{L(\eps)+1}\right)
\\&\leq
c_3\cdot 
\eps^{-2}\cdot\ln(\eps^{-1}) \cdot  \ln(\ln(\eps^{-1})).
\end{align*}

\section*{Acknowledgment}
The authors are grateful to Steffen Omland for many valuable
discussions and contributions at an early stage of this project.
We thank Erich Novak for pointing out Example \ref{ex1} to us, and
Stefan Heinrich and Holger Stroot for valuable discussions and comments.

Mike Giles was partially supported by the UK Engineering and Physical
Science Research Council (EPSRC) through the ICONIC Programme Grant,
EP/P020720/1.
Lukas Mayer was supported by the Deutsche Forschungsgemeinschaft
(DFG) within the RTG 1932 
`Stochastic Models for Innovations in the Engineering Sciences'.

\bibliographystyle{abbrvnat}
\bibliography{bib}

\end{document}